\newcommand*{\id}{{\mathrm{id}}}
\newcommand*{\un}{{\mathbf 1}}
\def\shuff#1#2{\mathbin{
      \hbox{\vbox{\hbox{\vrule \hskip#2 \vrule height#1 width 0pt}\hrule}\vbox{\hbox{\vrule \hskip#2 \vrule height#1 width 0pt\vrule }\hrule}}}}
\def\shuffl{{\mathchoice{\shuff{5pt}{3.5pt}}{\shuff{5pt}{3.5pt}}{\shuff{3pt}{2.6pt}}{\shuff{3pt}{2.6pt}}}}
\def\shuffle{{\, \shuffl \,}}
\newtheorem{thm}{Theorem}
\newtheorem{cor}[thm]{Corollary}
\newtheorem{lem}[thm]{Lemma}
\newtheorem{prop}[thm]{Proposition}
\newtheorem{defn}{Definition}
\begin{document}
 
\title[Cumulants and shuffles]{Monotone, free, and boolean cumulants:\\[0.1cm] a shuffle algebra approach.}

\vspace{1cm}

\author{Kurusch Ebrahimi-Fard}
\address{Department of Mathematical Sciences, 
		Norwegian University of Science and Technology (NTNU), 
		7491 Trondheim, Norway.
		{\tiny{On leave from UHA, Mulhouse, France.}}}
\email{kurusch.ebrahimi-fard@ntnu.no, kurusch.ebrahimi-fard@uha.fr}         
\urladdr{https://folk.ntnu.no/kurusche/}

\author{Fr\'ed\'eric Patras}
\address{Universit\'e C\^ote d'Azur,
		Labo.~J.-A.~Dieudonn\'e,
         		UMR 7351, CNRS,
         		Parc Valrose,
         		06108 Nice Cedex 02, France.}
\email{patras@unice.fr}
\urladdr{www-math.unice.fr/$\sim$patras}

\begin{abstract}
The theory of cumulants is revisited in the ``Rota way'', that is, by following a combinatorial Hopf algebra approach. Monotone, free, and boolean cumulants are considered as infinitesimal characters over a particular combinatorial Hopf algebra. The latter is neither commutative nor cocommutative, and has an underlying unshuffle bialgebra structure which gives rise to a shuffle product on its graded dual. The moment-cumulant relations are encoded in terms of shuffle and half-shuffle exponentials. It is then shown how to express concisely monotone, free, and boolean cumulants in terms of each other using the pre-Lie Magnus expansion together with shuffle and half-shuffle logarithms.
\end{abstract}

\maketitle

\begin{quote}
{\tiny{\bf Keywords:} monotone cumulants; free cumulants; boolean cumulants; combinatorial Hopf algebra; shuffle algebra; half-shuffle exponentials; half-shuffle logarithms, pre-Lie Magnus expansion.}\\
{\tiny{\bf MSC Classification}: 16T05; 16T10; 16T30; 46L53; 46L54}
\end{quote}


\section{Introduction}
\label{sec:intro}

Since the discovery of free cumulants as the proper way of encoding the notion of free independence in Voiculescu's theory of free probability \cite{voiculescu_92,voiculescu_95}, together with their combinatorial description in terms of non-crossing partitions (see e.g.~\cite{nicaspeicher_06}, also for further references), the literature on the interplay between the various notions of independence and cumulants has flourished and is still flourishing, as illustrated by recent works such as \cite{lehner-etal_15,eps_17,hasebesaigo_11,josuat_13,josuatetal_16,sw_2016}.

In \cite{ebrahimipatras_15,ebrahimipatras_16} we proposed a new approach to free cumulants. Consider a non-commutative probability space $(A,\phi)$ with its unital state $\phi: A \to k$. The central result in \cite{ebrahimipatras_15} is a concise description of the relation between moments and cumulants in free probability in terms of a fixed point equation defined on the graded dual of $T(T(A))$, the double tensor algebra over $A$ equipped with a suitable Hopf algebra structure, which actually is an unshuffle (or codendriform) bialgebra structure. This fixed point equation is solved by using a left half-shuffle exponential. The latter may be considered a natural extension of the notion of time-ordered exponential (also known as Picard or Dyson expansion) familiar in the context of linear differential equations.

In the present work we extend the results in \cite{ebrahimipatras_15} by showing that  $T(T(A))$ also encodes in a rather simple manner monotone as well as boolean cumulants. The key to this construction is provided by two other exponential maps (the shuffle and right-half shuffle exponentials), which are naturally defined on the dual of $T(T(A))$. For example, the relation between moments and multivariate monotone cumulants, usually obtained in terms of monotone partitions \cite{hasebesaigo_11}, is recovered by evaluating the shuffle exponential on $T(A)$. The three exponentials allow to compute moments in terms of the corresponding cumulants, and have logarithmic inverses that in return permit to expand cumulants in terms of moments.

From general algebraic arguments, one deduces an intriguing link between the half-shuffle exponentials and the shuffle exponential, given in terms of the pre-Lie Magnus expansion and its compositional inverse. This yields a simple way to express monotone, free, and boolean cumulants concisely in terms of each other. For example, the pre-Lie Magnus expansion permits to express bijectively free cumulants in terms of monotone cumulants. This extends consistently to new algebraic relations among the infinitesimal characters corresponding to monotone, free, and boolean cumulants (Theorem \ref{thm:transforming}). We show, among others, how to recover from this pre-Lie algebra approach the multivariate formulas describing relations between monotone, free, and boolean cumulants in terms of (irreducible) non-crossing partitions presented in reference \cite{lehner-etal_15}. 

\medskip 

The paper is organized as follows. In the next section we show how boolean, free and monotone cumulants can be described as infinitesimal characters on the double tensor algebra defined over a non-commutative probability space. The following section introduces general notions and identities for shuffle (also known as dendriform) algebras and unshuffle (or codendriform) bialgebras. The last section shows how these identities specialize to (abstract and combinatorial) formulas relating the various notions of cumulants.

\vspace{0.4cm}

\noindent {\bf{Acknowledgements}}: The second author acknowledges support from the grant ANR-12-BS01-0017, ``Combinatoire Alg\'ebrique, R\'esurgence, Moules et Applications". Support by the CNRS GDR ``Renormalisation" and the PICS program CNRS/CSIC, LJAD-ICMAT ``Alg\`ebres de Hopf combinatoires et probabilit\'es non commutatives" is also acknowledged.


\section{Cumulants as infinitesimal characters}
\label{sec:cumulants}

In the following we denote by $k$ the ground field of characteristic zero over which all algebraic structures are defined. Furthermore, it is assumed  that any $k$-algebra $A$ is associative and unital, if not stated otherwise. The product and unit in $A$ are denoted by $a \cdot_A b:=m_A(a \otimes b)$ respectively $1_A$. The extension of the former to a map from $A^{\otimes n}$ to $A$ is denoted $m_A^{[n-1]}$. We write $T(A)=\bigoplus_{n>0}A^{\otimes n}$ (resp.~$\overline T(A)=\bigoplus_{n\geq 0}A^{\otimes n}$), for the non-unital (resp.~unital) tensor algebra over $A$. Tensors are written using the word notation, i.e., we write $a_1\cdots a_n$ for $a_1\otimes \cdots \otimes a_n$. The symbol $a_1 \cdots a_n$ should not be confused with the product of the $a_i$ seen as elements in the algebra $A$, which is written $a_1\cdot_A a_2 \cdot_A\ \cdots\ \cdot_A a_n$. In this respect, $a^n$ stands for the tensor $a^{\otimes n}$. The identity map of an object $X$ is written $\id_X$, or simply $\id$ when no confusion can occur.  

\smallskip

In \cite{ebrahimipatras_15,ebrahimipatras_16} we considered the moment-cumulant relations in free probability from a Hopf algebraic point of view. The central object is the double tensor algebra $T(T(A))$ defined over a non-commutative probability space $(A,\phi)$, where $A$ is an algebra with unit $1_A$ and the unital map $\phi$ is a $k$-valued linear form on ${A}$, such that $\phi(1_{A}) = 1$. 

The double tensor algebra is defined to be $T(T(A)):=\oplus_{n > 0} T(A)^{\otimes n}$. We use the bar-notation to denote elements $w_1 | \cdots | w_n \in T(T(A))$, where $w_i$ are words in the tensor algebra $T(A)$. The vector space $T(T(A))$ is equipped with the concatenation product $a|b:= w_1 | \cdots | w_n | w_1' | \cdots | w_m'$ for $a= w_1 | \cdots | w_n$ and $b=  w_1' | \cdots | w_m'$ in $T(T(A))$. The resulting algebra is non-commutative and both multigraded, $T(T(A))_{n_1,\ldots ,n_k}:=T_{n_1}(A)\otimes \cdots \otimes T_{n_k}(A)$, as well as graded, $T(T(A))_n:=\bigoplus_{n_1+ \cdots +n_k=n}T(T(A))_{n_1,\ldots ,n_k}$. Similar observations hold for the unital case, $\overline T(T(A))$. We will identify without further comments a bar symbol such as $w_1|1|w_2$ with $w_1|w_2$ (formally, using the canonical map from $\overline T(\overline T(A))$ to $\overline T(T(A))$). For notational clarity, we will from now on denote by $\mathbf 1$ the unit of $\overline T(T(A))$. 

Given two (canonically ordered) subsets $S \subseteq U$ of the set of integers $\bf N^\ast$, we call connected component of $S$ relative to $U$ a maximal sequence $s_1 < \cdots < s_n$ in $S$ such that there are no $ 1 \leq i < n$ and $u \in U$, such that $s_i < u < s_{i+1}$. In particular, a connected component, or interval of $S$ in $\bf N$ is simply a maximal sequence of successive elements $s,s+1,\ldots ,s+n$ in $S$. Consider a word $a_1 \cdots a_n \in T(A)$. For a nonempty set $S:=\{s_1< \cdots < s_p\} \subseteq [n]$, we define $a_S:= a_{s_1} \cdots a_{s_p} \in T(A)$, and $a_\emptyset:=\un$. Denoting by $J_1,\ldots,J_l$ the connected components of $[n] - S$, we also set $a_{J^S_{[n]}}:= a_{J_1} | \cdots | a_{J_l} \in T(T(A))$. More generally, for $S \subseteq U \subseteq [n]$, set  $a_{J^S_U}:= a_{J_1} | \cdots | a_{J_l}\in T(T(A))$, where the ${J_j}$ are now the connected components of $U-S$ in $U$. 

\begin{thm} \cite{ebrahimipatras_15} \label{thm:HA}
The connected graded algebra $H:=\overline T(T(A))$ with the coproduct $\Delta : T(A) \to \overline T(A) \otimes  \overline T(T(A))$ defined by 
\begin{equation}
\label{HA}
	\Delta(a_1\cdots a_n) := \sum_{S \subseteq [n]} a_S \otimes  a_{J_1} | \cdots | a_{J_k},
\end{equation} 
and extended multiplicatively to all of $\overline T(T(A))$ (with $\Delta(\un):= \un \otimes \un$), is a connected graded non-commutative and non-cocommutative Hopf algebra with counit and unit the canonical maps $\epsilon : \overline T(T(A)) \to k=\overline T(T(A))_0$ and $\eta : k=\overline T(T(A))_0 \to H$. 
\end{thm}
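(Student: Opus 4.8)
The plan is to verify the Hopf algebra axioms by establishing coassociativity, counitality, and the existence of an antipode, exploiting the connected graded structure so that only coassociativity and counitality require real work. The key structural observation is that the coproduct is defined on generators $a_1\cdots a_n \in T(A)$ by the combinatorial rule \eqref{HA} and extended as an algebra morphism; since $\overline T(T(A))$ is free as an algebra on $T(A)$ (its concatenation product is the free product structure), there is no obstruction to extending $\Delta$ multiplicatively, and the counit, being the canonical projection, is automatically an algebra morphism. So the heart of the matter is to check that $(\Delta \otimes \id)\circ \Delta = (\id \otimes \Delta)\circ \Delta$ on a single word $a_1\cdots a_n$, after which multiplicativity of both sides propagates coassociativity to all of $H$.

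The main step, then, is the coassociativity computation on $a_1\cdots a_n$. Applying $\Delta$ once gives a sum over subsets $S \subseteq [n]$ of terms $a_S \otimes a_{J^S_{[n]}}$. To apply $\id \otimes \Delta$, I would use that $\Delta$ is multiplicative together with the fact that on each connected component $a_{J_i}$ the coproduct is again given by \eqref{HA} but now \emph{relativized} to the subset $J_i \subseteq [n]$ --- this is precisely what the notation $a_{J^S_U}$ was set up to handle. Iterating, $(\id\otimes\Delta)\Delta(a_1\cdots a_n)$ becomes a sum over pairs $S \subseteq T \subseteq [n]$ (choose $S$ first, then inside $[n]-S$ choose the part $T - S$ that gets pushed to the left), with a corresponding term $a_S \otimes a_{T-S\text{-part}} \otimes a_{J^T_{[n]}}$. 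Applying $\Delta\otimes\id$ instead amounts to choosing $S$ inside $T$ first, i.e. summing over the same set of flags $S\subseteq T\subseteq [n]$. The identity then reduces to a bookkeeping lemma: the connected components of $T-S$ relative to $[n]$, refined by removing $S$, match the connected components obtained the other way. This compatibility of the ``connected component relative to $U$'' operation under nesting $S \subseteq T \subseteq [n]$ is the combinatorial crux, and I expect it to be the main obstacle --- it requires care because intervals can merge or split depending on which ambient set one works relative to, and the bar-notation identifications ($w|1|w' = w|w'$) must be tracked. Counitality, $(\epsilon\otimes\id)\Delta = \id = (\id\otimes\epsilon)\Delta$, is then immediate on generators by isolating the terms $S=\emptyset$ and $S=[n]$ in \eqref{HA}.

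Finally, once $H$ is a graded bialgebra that is connected --- meaning $H_0 = k$, which holds since $T(T(A))_0 = 0$ so the unital version has one-dimensional degree-zero part --- the antipode exists automatically by the standard recursion $S = -S\star (\id - \eta\epsilon) \star \cdots$ summing over the (finite, by grading) convolution powers, or equivalently by the Milnor--Moore/Takeuchi argument that a connected graded bialgebra is a Hopf algebra. Non-commutativity of $H$ is clear from the concatenation product $w_1|\cdots|w_n$, and non-cocommutativity follows by inspecting \eqref{HA} on a two-letter word $a_1 a_2$, where the term for $S=\{1\}$ is $a_1 \otimes a_2$ whereas for $S=\{2\}$ it is $a_2\otimes a_1$, but the left tensor factors lie in $\overline T(A)$ and right in $\overline T(T(A))$, so no flip symmetry can hold. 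Thus the only genuine content is the relativized coassociativity bookkeeping described above; everything else is either structural (freeness, grading) or a one-line check.
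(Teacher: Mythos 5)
Your strategy is the right one; the paper itself states Theorem~\ref{thm:HA} without proof, importing it from \cite{ebrahimipatras_15}, and your outline matches the argument given there. Freeness of $\overline T(T(A))$ as an algebra over the vector space $T(A)$ does make the multiplicative extension and the reduction of coassociativity to a single word automatic, counitality is exactly the $S=\emptyset$ and $S=[n]$ terms, and connectedness of the grading yields the antipode by the usual convolution-geometric series. The ``bookkeeping lemma'' you flag as the main obstacle does go through, and the reason deserves to be made explicit: writing both iterated coproducts as sums over flags $S\subseteq T\subseteq[n]$, with $U:=T-S$ and $U_i:=U\cap J_i$ where $J_1,\ldots,J_k$ are the connected components of $[n]-S$, the key point is that each $J_i$ is an \emph{interval of integers disjoint from $S$}. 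Hence no element of $S$ can separate two elements of the same $U_i$, while distinct $U_i$, $U_j$ are always separated by elements of $S\subseteq T$; consequently the connected components of $T-S$ relative to $T$ are precisely the nonempty $U_i$ (each forming a single block), and the connected components of $[n]-T$ relative to $[n]$ are the disjoint union over $i$ of the components of $J_i-U_i$ relative to $J_i$. This identifies the middle and right tensor legs of $(\Delta\otimes\id)\Delta$ and $(\id\otimes\Delta)\Delta$ term by term, with the convention $w|\un|w'=w|w'$ absorbing the empty $U_i$. So intervals neither merge nor split in an uncontrolled way; the interval property of the $J_i$ is exactly what prevents it.

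One genuine slip: your non-cocommutativity witness fails. Indeed $\Delta(a_1a_2)=a_1a_2\otimes\un+\un\otimes a_1a_2+a_1\otimes a_2+a_2\otimes a_1$ is invariant under the flip, so length two does not suffice. The asymmetry first appears in length three: $\Delta(a_1a_2a_3)$ contains the term $a_1a_3\otimes a_2$ (from $S=\{1,3\}$), but the only term with left leg $a_2$ is $a_2\otimes a_1|a_3$ (from $S=\{2\}$), and $a_1|a_3\in T_1(A)\otimes T_1(A)$ is distinct from the single word $a_1a_3\in T_2(A)$ inside $T(T(A))$. This is the precise form of the ``bars appear only on the right'' phenomenon you allude to, but it needs a word long enough for a bar actually to occur.
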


We write $H^+=T(T(A))$ and $u:=\eta\circ \epsilon$.
The antipode $S$ of $H$ is the inverse of the identity  map  $\id$ with respect to the convolution product, and given through
\begin{equation}
\label{antipode}
	S=\frac{1}{u + P}=\sum_{i \ge 0} (-1)^i P^{\ast i},
\end{equation}
where $P$ is the linear map $P:= \id - u$. Note that $P(\un)=0$ and $P=\id$ on $H^+$, the kernel of the counit. 

Recall that the convolution product on the space of linear maps, $Lin(H,H)$ (resp.~$Lin(H,k)$), is given by $f * g := m_H \circ (f \otimes g) \circ \Delta$ (resp.~$f * g := m_k \circ (f \otimes g) \circ \Delta$) for $f,g \in Lin(H,H)$ (resp.~$f,g \in Lin(H,k)$). It defines a unital $k$-algebra structure on $Lin(H,H)$ (resp.~${{\mathcal L}_A}:=Lin(H,k)$) with $u$ (resp.~the counit $\epsilon: H \to k$) as its unit.

\begin{defn}
A linear form $\Phi \in {{\mathcal L}_A}$ is a character if it is unital, $\Phi(\un)=1$, and multiplicative, i.e., for $w_1,w_2 \in H$, $\Phi(w_1|w_2)=\Phi(w_1)\Phi(w_2).$ A linear form $\kappa \in {{\mathcal L}_A}$ is an infinitesimal character if $\kappa(\un)=0$ and if for $w_1,w_2 \in H^+$, $\kappa(w_1|w_2)=0.$
\end{defn}

Recall that the set $G(A) \subset {\mathcal L}_A$ of characters forms a group with respect to the convolution product. The reader is refered to \cite{cartier_07,figueroagraciabondia_05,manchon_08} for details. The inverse of an element $\Phi \in G(A)$ is given by composition with the Hopf algebra antipode, $\Phi^{-1}=\Phi \circ S$. The set $g(A) \subset {\mathcal L}_A$ of infinitesimal characters forms a Lie algebra with Lie bracket defined by the commutator in ${\mathcal L}_A$. For $\alpha \in g(A)$ and any word $w \in T(A)$ the exponential $\exp^*(\alpha)(w) := \sum_{j\ge 0} \frac{1}{j!}\alpha^{* j}(w)$ reduces to a finite sum. It is well-known that $\exp^*$ restricts to a natural bijection from $g(A)$ onto the group $G(A)$. The inverse of $\exp^*$ is given by $\log^*(\epsilon +\gamma)(w)=\sum_{l\ge 1}{(-1)^{l-1}\over l}\gamma^{* l}(w)$, where the sum terminates after a finite number of terms. 

Let $(A,\phi)$ be a non-commutative probability space. Its linear form $\phi$ is first extended to $T(A)$ by defining for all words $a_1\cdots a_n \in A^{\otimes n}$ 
$$
	\phi(a_1 a_2 a_3 \cdots a_n) := \phi(a_1\cdot_A a_2\cdot_A  a_3\cdot_A\ \cdots\  \cdot_A a_n).
$$
This map $\phi$ is then extended multiplicatively to a map $\Phi: \overline T(T(A)) \to k$ with $\Phi(\un):=1$ and 
$$
	{\Phi}(w_1 | \cdots | w_m) := \phi(w_1) \cdots \phi(w_m). 
$$
Viewing the letters in the word $w=a_1 \cdots a_n \in T(A)$ as non-commutative random variables, the multivariate moment of $w$ is defined to be $m_n(a_1, \ldots, a_n):=\phi(a_1\cdot_A a_2\cdot_A  a_3\cdot_A\ \cdots\  \cdot_A a_n)=\phi(w)$.


\subsection{Monotone cumulants as infinitesimal characters}
\label{ssec:monotonecumulants}

\begin{thm} \label{tim:monotone}
Let $(A,\phi)$ be a non-commutative probability space with unital map $\phi: A \to k$ and $\Phi$ its extension to $\overline T(T(A))$ as a character. Let the map $\rho: \overline T(T(A))\to k$ be the infinitesimal character solving $\Phi = \exp^*(\rho)$, i.e., $\rho = \log^*(\Phi)$. For $a \in A$ we set $h_n:=\rho(a^{n})$, $n \geq 1$, and $m_l:=\Phi(a^{l})=\phi(a^l)$, $l\geq 0$. Then 
\begin{equation}
\label{monotone}
	m_n=\sum\limits_{s=1}^n\sum\limits_{1=i_0 < \cdots < i_{s-1} < i_s = n+1} \frac{1}{s!}
	\prod_{j=1}^s i_{j-1} h_{i_j - i_{j-1}}.
\end{equation}
In particular, the $h_n$ identify with the monotone cumulants.
\end{thm}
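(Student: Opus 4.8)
The plan is to compute $\exp^*(\rho)$ evaluated on the word $a^n = a\otimes\cdots\otimes a \in T(A) = H^+$ and show that the resulting expansion coincides with the right-hand side of \eqref{monotone}. We start from the power series expansion
\[
	m_n = \Phi(a^n) = \exp^*(\rho)(a^n) = \sum_{s \geq 1} \frac{1}{s!}\, \rho^{*s}(a^n),
\]
where the sum is finite because $\rho$ vanishes on $H_0$ and $\rho^{*s}(a^n) = 0$ for $s > n$ by connectedness of the grading. The task thus reduces to identifying $\rho^{*s}(a^n)$ with the inner sum $\sum_{1 = i_0 < \cdots < i_s = n+1} \prod_{j=1}^s i_{j-1}\, h_{i_j - i_{j-1}}$.

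First I would unwind the iterated coproduct. By definition of the convolution product, $\rho^{*s}(a^n) = (\rho^{\otimes s}) \circ \Delta^{[s-1]}(a^n)$, so I need the iterated coproduct $\Delta^{[s-1]}$ applied to $a_1\cdots a_n$ and then specialize all letters to $a$. Since $\rho$ is an infinitesimal character, it vanishes on any tensor $w_1|\cdots|w_m$ with $m \geq 2$ (as soon as two or more bars appear) and on $\un$; hence in the expansion of $\Delta^{[s-1]}(a^n) = \sum a_{S_1} \otimes a_{S_2} \otimes \cdots$ only those terms survive in which every tensor slot is a \emph{single} word $a_{T}$ (no bars) with $T$ nonempty. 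Tracking through the recursive structure of $\Delta$ in \eqref{HA}, a nonzero contribution to $\rho^{*s}$ corresponds to a chain $[n] = U_0 \supseteq U_1 \supseteq \cdots \supseteq U_{s}$ together with selected subsets, such that at each stage the removed complement has all its connected components absorbed into subsequent words; the combinatorics force the surviving configurations to be exactly the nested sequences of intervals determined by a composition $n = (i_1 - i_0) + \cdots + (i_s - i_{s-1})$ with $1 = i_0 < i_1 < \cdots < i_s = n+1$. The word fed to the $j$-th copy of $\rho$ then has length $i_j - i_{j-1}$, contributing the factor $h_{i_j - i_{j-1}} = \rho(a^{i_j - i_{j-1}})$.

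The remaining point is to account for the weight $\prod_{j=1}^s i_{j-1}$. This integer factor should emerge as a \emph{multiplicity}: for a fixed composition into intervals, there are several ways the iterated coproduct can carve out the same collection of subwords, because at each step a connected component can be "passed down" through one of several bar-slots, and the number of such choices at stage $j$ is precisely $i_{j-1}$ (the number of already-separated letters to the left that the new block can be inserted among, or equivalently the number of positions in the partially built bar-word). Establishing this counting cleanly is the main obstacle: one must set up careful bookkeeping for how the recursively-defined coproduct \eqref{HA} distributes connected components across the tensor factors, and verify that summing the equal contributions over these choices yields exactly $\prod_{j=1}^s i_{j-1}$. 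Once that multiplicity count is pinned down, combining it with the $1/s!$ from the exponential gives \eqref{monotone} verbatim.

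Finally, to conclude that the $h_n$ "identify with the monotone cumulants," I would invoke that \eqref{monotone} is precisely the known moment-monotone-cumulant formula of Hasebe--Saigo \cite{hasebesaigo_11} (specialized to a single variable, the multivariate version being recovered identically by not specializing the letters $a_i$), and that this system of equations determines the $h_n$ uniquely from the $m_n$ by triangularity; hence $h_n = \rho(a^n)$ must coincide with the monotone cumulant $r_n$. Alternatively, one can cite that $\log^*\Phi$ is the unique infinitesimal character with $\exp^*$-inverse $\Phi$, matching the defining property of monotone cumulants as established in the monotone-partition framework.
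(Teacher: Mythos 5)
Your strategy is the same as the paper's: expand $\Phi(a^n)=\exp^*(\rho)(a^n)=\sum_{s}\frac{1}{s!}\rho^{*s}(a^n)$, use that the infinitesimal character $\rho$ annihilates $\un$ and every bar-word $w_1|\cdots|w_m$ with $m\geq 2$, and reduce the problem to counting the surviving terms of the iterated coproduct. However, the one step that carries the entire content of the theorem --- that the surviving configurations for $\rho^{*s}(a^n)$ are indexed by $1=i_0<\cdots<i_s=n+1$ and occur with multiplicity exactly $\prod_{j=1}^s i_{j-1}$ --- is precisely the step you defer (``establishing this counting cleanly is the main obstacle''), so as written this is an outline rather than a proof. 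Moreover, the mechanism you sketch for the multiplicity is not the right one: nothing is ``passed down through one of several bar-slots''. The point is simpler. In $\Delta(a_1\cdots a_l)=\sum_S a_S\otimes a_{J^S_{[l]}}$ the right factor survives under $\rho$ only when the complement $[l]\setminus S$ is a \emph{single} interval $I$; after specializing all letters to $a$, the term only remembers $|I|$, and the multiplicity of $a^{k}\otimes a^{l-k}$ is the number of positions of an interval of length $l-k$ in $[l]$, namely $k+1$. This is the paper's reduced linearised coproduct $\overline\Delta(a^{l})=\sum_{k=1}^{l-1}(k+1)\,a^{k}\otimes a^{l-k}$, and left-iterating it, $\overline\Delta^{[q]}=(\overline\Delta^{[q-1]}\otimes\id)\overline\Delta$, produces the product of positional weights $(k_1+1)(k_2+1)\cdots$ directly; formula \eqref{monotone} then drops out after reindexing. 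You should also note, as the paper does, that $\overline\Delta$ is \emph{not} coassociative, so the bracketing of the iteration has to be fixed and checked against the convolution power --- another place where your ``careful bookkeeping'' genuinely needs to be done rather than asserted.

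Your final paragraph is fine: once \eqref{monotone} is established, identifying the $h_n$ with the monotone cumulants by matching the Hasebe--Saigo relation and invoking triangularity is exactly the paper's conclusion. But until the multiplicity computation is actually carried out (the reduced-coproduct calculation above, or an equivalent induction), the central claim $\rho^{*s}(a^n)=\sum_{1=i_0<\cdots<i_s=n+1}\prod_{j=1}^s i_{j-1}\,h_{i_j-i_{j-1}}$ remains unproved.
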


The univariate monotone moment-cumulant relation \eqref{monotone} was given by Hasebe and Saigo in \cite{hasebesaigo_11}. Before proving the statement of Theorem \ref{tim:monotone}, let us calculate $\Phi(a^n) = \exp^*(\rho)(a^n)$, $a^n \in T(A)$, for $n=1,2,3,4$. 
\begin{eqnarray*}
	\Phi(a) 	&=& \exp^*(\rho)(a)=\rho(a) = h_1\\
	\Phi(aa) 	&=& \big(\rho + \frac{1}{2!}\rho * \rho\big) (aa) 
			  = \rho (aa) +  \frac{1}{2!} m_k\circ(\rho \otimes \rho)\circ\Delta(aa)  = h_2 + h_1h_1\\
	\Phi(aaa) 	&=& \big(\rho + \frac{1}{2!}\rho * \rho+ \frac{1}{3!}\rho * \rho* \rho\big) (aaa) 
			   = h_3 + \frac{5}{2}h_1h_2 + h_1h_1h_1\\		
	\Phi(aaaa) 	&=& \big(\rho + \frac{1}{2!}\rho * \rho+ \frac{1}{3!}\rho * \rho* \rho + \frac{1}{4!}\rho *\rho * \rho* \rho\big) (aaaa) \\
			&=& h_4 
				+ 3 h_1h_3 
			         +\frac{3}{2}h_2h_2 
			         +\frac{13}{3}h_1h_1h_2 
			         + h_1h_1h_1h_1.\\				   
\end{eqnarray*}
\begin{proof} (Thm.~\ref{tim:monotone})
Recall that $\rho \in g(A)$ is an infinitesimal character, i.e., $\rho(\un)=0$ and $\rho(w_1|w_2)=0$ for any words $w_1,w_2 \in T(A)$. Then $\exp^*(\rho)=\sum_{i=0}^\infty \frac{1}{i!} \rho^{* i}$ is such that
$$
	m_l=\exp^*(\rho)(a^l)= \sum_{i=0}^l \frac{1}{i!} \rho^{* i}(a^l).
$$
Now recall the definition of the convolution product $\rho_1 * \rho_2 * \cdots * \rho_l = m^{[l-1]}_k(\rho_1 \otimes \rho_2 \otimes \cdots \otimes \rho_l)\Delta^{[l-1]}$, where $m^{[0]}_k=\Delta^{[0]}:=id$, $\Delta^{[1]}= \Delta$, $\Delta^{[l]} := (\Delta^{[l-1]} \otimes \id)\Delta$, and the coproduct $\Delta$ was defined in \eqref{HA}
$$
	\Delta(a_1\cdots a_n) = \sum_{S \subseteq [n]} a_S \otimes a_{J^S_{[n]}}.
$$
Since $\rho \in g(A)$ we are only interested in what we will call the ``reduced linearised'' part of $\Delta$, i.e., the composite, written $\overline\Delta$, of $\Delta$ with the canonical projection  to $T(A) \otimes T(A)$. In particular
\begin{equation}
\label{reducedcoprod1}
	\overline\Delta(a^{l}) := \sum_{I \subset [l],\ I \neq \emptyset \atop I\, \rm{interval}} a_{[l-|I|]} \otimes  a_I
	= \sum_{k=1}^{l-1} (k+1) a^{k} \otimes a^{l-k}.
\end{equation}
Iteration on the left gives 
$$
	(\overline\Delta \otimes \id)\overline\Delta(a^{l}) 
	= \sum_{k_1=1}^{l-1}\sum_{k_2=1}^{k_1-1} (k_1+1) (k_2+1) a^{k_2} \otimes a^{k_1-k_2} \otimes a^{l-k_1}
$$
and
\begin{align*}
	\overline\Delta^{[n]}(a^{l}) &= (\overline\Delta^{[n-1]} \otimes \id ) \overline\Delta  (a^{l}) \\
	&= \sum_{k_1=1}^{l-1}\sum_{k_2=1}^{k_1-1} \cdots \sum_{k_{n-1}=1}^{k_{n-2}-1}(k_1+1) (k_2+1) \cdots (k_{n-1}+1) 
	a^{k_{n-1}} \otimes  a^{k_{n-2}-k_{n-1}} \otimes \cdots\\
	&\hspace{7cm} \otimes  a^{k_3 - k_2} \otimes a^{k_1-k_2} \otimes a^{l-k_1},
\end{align*}
so that $\overline\Delta^{[n-1]}(a^{n}) = n! a^{\otimes n}.$

The rationale behind the reduced linearised coproduct $\overline\Delta$ is the fact that $\rho \in g(A)$ maps $\un$ as well as any expression $w_1|\cdots |w_j \in H$, $w_1,\ldots,w_j \in T(A)$ to zero. It is important to note that the reduced linearised coproduct is not coassociative, and the above only holds in the case of left-iteration $\overline\Delta^{[n]} := (\overline\Delta^{[n-1]} \otimes \id ) \overline\Delta$. Therefore 
\begin{align*}
	 \frac{1}{n!} \rho^{*n}(a^l) &= \frac{1}{n!} 
	 \sum_{k_1=1}^{l-1}\sum_{k_2=1}^{k_1-1} \cdots \sum_{k_{n-1}=1}^{k_{n-2}-1}  (k_1+1) (k_2+1) \cdots (k_{n-1}+1) \\
	&\hspace{5cm}\rho(a^{k_{n-1}}) \rho(a^{k_{n-2}-k_{n-1}}) \cdots \rho(a^{k_3 - k_2}) \rho(a^{k_1-k_2}) \rho(a^{l-k_1})\\
	&=\frac{1}{n!} \sum_{k_1=2}^{l}\sum_{k_2=2}^{k_1-1} \cdots \sum_{k_{n-1}=2}^{k_{n-2}-1}  k_1k_2 \cdots k_{n-1} 
	\rho(a^{k_{n-1}-1}) \rho(a^{k_{n-2}-k_{n-1}}) \cdots \rho(a^{k_3 - k_2}) \rho(a^{k_1-k_2}) \rho(a^{l-k_1+1})\\
	&= \frac{1}{n!} \sum_{1=k_n < k_{n-1} < \cdots < k_1 < k_0=l+1} 
	k_1k_2 \cdots k_{n-1} h_{k_{n-1}-1} h_{k_{n-2}-k_{n-1}} \cdots h_{k_3 - k_2}h_{k_1-k_2} h_{l-k_1+1}\\
	&= \frac{1}{n!} \sum_{1=k_n < k_{n-1} < \cdots < k_1 < k_0=l+1} 
		\prod_{j=1}^{n} k_{j} h_{k_{j-1} - k_{j}},
\end{align*}
which yields \eqref{monotone} modulo some reindexing. 
\end{proof}

We consider now the multivariate case, that we have chosen to treat separately for didactical reason (since the univariate case follows from the above simple and elementary calculation that does not require the use of monotone partitions). For $w=a_1 \cdots a_n \in T(A)$ we set $h_n(a_1,\ldots,a_n)=\rho(w)$, and $m_n(a_1,\ldots,a_n):=\Phi(w)$. We will show that $\Phi = \exp^*(\rho)$ applied to a word $w=a_1 \cdots a_n \in T(A)$ of length $n$ can be given in terms of monotone partitions, or equivalently, in terms of non-crossing partitions weighted by inverse tree factorials \cite{lehner-etal_15}
\begin{equation}
\label{Lehner1}
	\Phi (w) = \exp^*(\rho)(w) = \sum_{\pi \in \mathcal{NC}_n}\frac{1}{\tau(\pi)!} h_\pi(a_1, \ldots, a_n),
\end{equation}
where $\mathcal{NC}_n$ stands for the set of non-crossing partitions and $h_\pi(a_1, \ldots, a_n)$ is defined as follows
$$
	h_\pi(a_1, \ldots, a_n):=\prod_{\pi_i \in \pi} h_{|\pi_i|}(a_{\pi_i}).
$$
Recall that a partition $\pi=\pi_1\sqcup\dots\sqcup\pi_k$ of $[n]$ (or of any finite subset of the integers) is non-crossing if and only if there are no $(i,l)\in \pi_a^2$ and $(j,m)\in \pi_b^2$, $1\leq a,b\leq k$ with $a\not= b$ and $i<j<l<m$.
Here $|\pi_i|$ denotes the number of elements in the block $\pi_i \in \pi \in\mathcal{NC}_n$. It is assumed that the elements in the block $\pi_i \in \pi$ are ordered, $\pi_i:=\{j_1 < \cdots < j_{|\pi_i|}\}$, and $a_{\pi_i}:=a_{j_1} \cdots a_{j_{|\pi_i|}}$. The tree factorial $\tau(\pi)!$ corresponds to the forest $\tau(\pi)$ of rooted trees encoding the nesting structure of the noncrossing partition $\pi \in \mathcal{NC}_n$. See \cite{lehner-etal_15} for details. 

The reduced linearised coproduct \eqref{reducedcoprod1} for a general word  $w=a_1 \cdots a_n \in T(A)$ can be expressed in terms of interval partitions
\begin{equation}
\label{reducedcoprod2}
	\overline\Delta(a_1 \cdots a_n) 
	= \sum_{I_1 \sqcup I_2 \sqcup I_3 = [n] \atop (I_2 \neq \emptyset) 
	\wedge  (I_1 \sqcup I_3 \neq \emptyset)} a_{I_1I_3} \otimes  a_{I_2},
\end{equation}
where $I_1 \sqcup I_2 \sqcup I_3 = [n]$ denotes a partition of $[n]$ into intervals $I_1=\{1,2,\ldots, i\}$, $I_2=\{i+1,i+2\ldots,j\}$, $I_3=\{j+1,j+2,\ldots,n\}$. Observe that by definition $\overline\Delta(w)=0$ for words of length one and zero. 

\begin{lem}\label{lem:monotone1}
Let us write $\overline\Delta^{[q]}: T(A) \to T(A)^{\otimes q+1}$ for the $q$-fold left-iterated reduced linearised coproduct \eqref{reducedcoprod2}, where $\overline\Delta^{[q]}:=(\overline\Delta^{[q-1]} \otimes \id)\overline\Delta$, $\overline\Delta^{[0]}:=\id$. Then we have for $w=a_1 \cdots a_n \in T(A)$ that 
\begin{equation}
 	\overline\Delta^{[q-1]}(w) = \sum_{\gamma \in \mathcal{M}_n^q \atop \gamma = \gamma_1 \sqcup \cdots \sqcup  \gamma_q} a_{\gamma_1} \otimes \cdots \otimes a_{\gamma_q}, 
\end{equation} 
where $\mathcal{M}_n^q$ is the set of monotone  partitions of the set $[n]$ into $q$ block. 
\end{lem}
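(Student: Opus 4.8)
The plan is to argue by induction on the number of blocks $q$, matching the recursion $\overline\Delta^{[q-1]} = (\overline\Delta^{[q-2]}\otimes\id)\circ\overline\Delta$ that defines the left-iterated coproduct against an analogous recursion for monotone partitions. Recall that a monotone partition of $[n]$ into $q$ blocks is a non-crossing partition $\gamma = \gamma_1\sqcup\cdots\sqcup\gamma_q$ equipped with the displayed ordering of its blocks, subject to the constraint that whenever $\gamma_a$ is nested inside $\gamma_b$ one has $a>b$. The base case $q=1$ is immediate: $\overline\Delta^{[0]}=\id$, $\mathcal{M}_n^1=\{[n]\}$, and $a_{[n]}=w$.

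For the inductive step I would first record the structural fact that in any non-crossing partition the blocks that are maximal for the nesting order (no block nested inside them) are precisely the intervals of $[n]$: a block that is not an interval has a nonempty gap, and the non-crossing condition forces any block meeting that gap to lie entirely inside it, so such a block is not nesting-maximal; conversely an interval has no gaps at all. Hence, in a monotone partition $\gamma$, the top block $\gamma_q$ is necessarily an interval $I\subseteq[n]$. This furnishes the bijection that drives the induction: an element $\gamma\in\mathcal{M}_n^q$ is the same datum as a nonempty interval $I=\gamma_q\subseteq[n]$ together with the monotone partition of the complement $[n]\setminus I$ into $q-1$ blocks obtained by deleting $\gamma_q$ and keeping the induced order and labelling, which after the order-isomorphism $[n]\setminus I\cong[n-|I|]$ is an element of $\mathcal{M}^{q-1}_{n-|I|}$. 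Writing $[n]\setminus I = I_1\sqcup I_3$ with $I_1$ the part preceding $I$ and $I_3$ the part following it, this $I$ is exactly the middle block $I_2$ occurring in the reduced coproduct \eqref{reducedcoprod2}, and the side conditions $I_2\neq\emptyset$ and $I_1\sqcup I_3\neq\emptyset$ there correspond respectively to $q\ge1$ and $q\ge2$. Thus the induction goes: write $\overline\Delta^{[q-1]}(w) = (\overline\Delta^{[q-2]}\otimes\id)\bigl(\sum a_{I_1I_3}\otimes a_{I_2}\bigr)$, apply the induction hypothesis to the word $a_{I_1I_3}$ of length $n-|I_2|$ to expand the left factor as $\sum_{\delta\in\mathcal{M}^{q-1}_{n-|I_2|}}(a_{I_1I_3})_{\delta_1}\otimes\cdots\otimes(a_{I_1I_3})_{\delta_{q-1}}$, translate each $\delta_j$ back to the subset $\tilde\delta_j\subseteq I_1\cup I_3\subseteq[n]$ it indexes so that $(a_{I_1I_3})_{\delta_j}=a_{\tilde\delta_j}$, and reassemble $\gamma := \tilde\delta_1\sqcup\cdots\sqcup\tilde\delta_{q-1}\sqcup I_2$; the bijection above identifies the resulting double sum with $\sum_{\gamma\in\mathcal{M}_n^q}a_{\gamma_1}\otimes\cdots\otimes a_{\gamma_q}$.

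The main obstacle is verifying that the stated correspondence really is a bijection, i.e.\ that adjoining an interval $I$ as the last block to a monotone partition of $[n]\setminus I$ always yields a non-crossing partition of $[n]$ whose labelling remains monotone, and that deletion of $\gamma_q$ is the two-sided inverse. For non-crossingness: as $I$ is an interval, no block may straddle it, so a putative crossing quadruple $i<j<l<m$ with two entries in $I$ would force a third into $I$, which is impossible; and crossings avoiding $I$ already existed in the linear order on $[n]\setminus I$, where by hypothesis there are none. For monotonicity: the nesting relations among the blocks of $[n]\setminus I$ are unchanged upon embedding into $[n]$, since a gap of such a block can at most acquire the interval $I$ itself, never another block; moreover $I$, being an interval, is nesting-maximal, so it legitimately receives the largest label $q$, and any block that has $I$ nested inside one of its gaps carries a label $\le q-1<q$, keeping the labelling monotone. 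Symmetrically, restricting a monotone partition of $[n]$ to $[n]\setminus\gamma_q$ preserves both properties. I would isolate these two checks as a short sub-lemma, after which the induction itself is routine bookkeeping.
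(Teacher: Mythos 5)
Your proof is correct and follows essentially the same route as the paper: induction on the number of blocks $q$, peeling off the last tensor factor of $\overline\Delta^{[q-1]}$ as an interval $I_2=\gamma_q$ and matching this with the fact that the top block of a monotone partition is an interval. You additionally spell out the bijectivity check (that adjoining/deleting the interval block preserves non-crossingness and monotonicity), which the paper explicitly leaves to the reader.
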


Recall that the blocks of a non-crossing partition $\gamma = \gamma_1 \sqcup \cdots \sqcup  \gamma_q$ are naturally (pre)ordered. The preorder is defined by $\gamma_i \geq \gamma_j$ if and only if there exist $a,b \in \gamma_j,$ $c\in \gamma_i$ such that $a \leq c \leq b$. A monotone partition is a non-crossing partition equipped with a total order of its blocks refining the natural partial order just defined. Choosing such a total order (that we will call also a monotone labelling of the blocks) amounts to reindexing the blocks in such a way that $i<j$ implies $\gamma_i\leq\gamma_j$. Notice that if $\gamma$ is a monotone partition, $\gamma_m$ is necessarily an interval. Monotone partitions of subsets of $[n]$ are defined similarly. We refer the reader to \cite{lehner-etal_15} for details on monotone partitions. 

The Lemma follows by induction on the number of blocks. We have
$$\overline\Delta^{[q-1]}(w) = \sum\limits_\gamma \overline\Delta^{[q-2]}(a_{[n]-\gamma_q}) \otimes a_{\gamma_q},$$
where $\gamma_q$ runs over the intervals of $[n]$ which are non empty and different from $[n]$.
By induction, we have
 $$\Delta^{[q-2]}(a_{[n]-\gamma_q})=\sum\limits_{\gamma'\in\mathcal{M}_{[n]-\gamma_q}^{q-1}}a_{\gamma'_1}\otimes\dots\otimes a_{\gamma'_{q-1}}$$
 where $\mathcal{M}_{[n]-\gamma_q}^{q-1}$ stands for the set of monotone partitions of the set $[n]-\gamma_q$ into $q-1$ blocks. Since $\gamma_q$ is an interval, it follows that $\gamma=\gamma'_1\sqcup\cdots\sqcup\gamma'_{q-1}\sqcup\gamma_q$ is a monotone partition. We let the reader check that any monotone partition can be obtained in that way.


The result of Lemma \ref{lem:monotone1} is consistent with the characterisation of non-crossing partitions as set partitions which can be reduced to the empty partition by successively extracting interval blocks without crossing other blocks, see e.g.~\cite{eps_17} for an application of this idea to the study of mixtures of free and classcial probability.     

\begin{thm}
Let $(A,\phi)$ be a non-commutative probability space with unital map $\phi: A \to k$ and $\Phi$ its extension to $\overline T(T(A))$ as a character. Let the map $\rho: \overline T(T(A))\to k$ be the infinitesimal character solving $\Phi = \exp^*(\rho)$, i.e., $\rho = \log^*(\Phi)$. For the word $w=a_1 \cdots a_n \in T(A)$ we set $h_n(a_1,\ldots,a_n)=\rho(w)$, and $m_n(a_1,\ldots,a_n):=\Phi(w)$. Then
\begin{equation}
\label{monotone2}
	\Phi(w) =\sum\limits_{s=1}^n \frac{1}{s!}\sum\limits_{\gamma \in \mathcal{NC}_n^s} 
	\mathrm{m}(\gamma) h_{\gamma}(a_1, \ldots, a_n ),
\end{equation}
where $\mathrm{m}(\gamma)$ is the number of monotone labellings of $\gamma \in \mathcal{NC}_n$. In particular, $h_n(a_1, \ldots, a_n )$ identifies with the $n$-th multivariate monotone cumulant map.
\end{thm}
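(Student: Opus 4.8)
The plan is to expand $\Phi=\exp^*(\rho)$ as $\Phi(w)=\sum_{s\ge 0}\tfrac{1}{s!}\rho^{*s}(w)$ and to evaluate each term $\rho^{*s}(w)$ via Lemma~\ref{lem:monotone1}. First I would recall that $\rho^{*s}=m_k^{[s-1]}\circ\rho^{\otimes s}\circ\Delta^{[s-1]}$, and that since $\rho\in g(A)$ annihilates $\mathbf 1$ and every bar-word $w_1|\cdots|w_j$ with at least two factors, only the reduced linearised part of the iterated coproduct survives when evaluating on a word $w=a_1\cdots a_n\in T(A)$: one may replace $\Delta^{[s-1]}$ by $\overline\Delta^{[s-1]}$, the left-iterated reduced linearised coproduct of \eqref{reducedcoprod2}, exactly as in the univariate computation above. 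This also makes the sum over $s$ finite, since $\overline\Delta^{[s-1]}(w)=0$ as soon as $s>n$ (one cannot split $[n]$ into more than $n$ nonempty blocks), and the $s=0$ term is $\epsilon(w)=0$ because $n\ge 1$; hence $\Phi(w)=\sum_{s=1}^n\tfrac1{s!}\rho^{*s}(w)$.

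Next I would plug in Lemma~\ref{lem:monotone1}, which writes $\overline\Delta^{[s-1]}(w)=\sum_{\gamma\in\mathcal M_n^s,\ \gamma=\gamma_1\sqcup\cdots\sqcup\gamma_s} a_{\gamma_1}\otimes\cdots\otimes a_{\gamma_s}$, the sum running over monotone partitions of $[n]$ into $s$ blocks, a monotone partition being a non-crossing partition together with a choice of compatible total order on its blocks. Applying $\rho^{\otimes s}$ and then $m_k^{[s-1]}$ turns the $\gamma$-summand into $\rho(a_{\gamma_1})\cdots\rho(a_{\gamma_s})=\prod_{i=1}^s h_{|\gamma_i|}(a_{\gamma_i})=h_\gamma(a_1,\dots,a_n)$, which depends only on the underlying non-crossing partition and not on the chosen monotone labelling. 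Thus $\rho^{*s}(w)=\sum_{\gamma\in\mathcal M_n^s} h_\gamma(a_1,\dots,a_n)$. I would then regroup the sum according to the underlying non-crossing partition: each $\gamma\in\mathcal{NC}_n^s$ carries exactly $\mathrm m(\gamma)$ monotone labellings, so $\rho^{*s}(w)=\sum_{\gamma\in\mathcal{NC}_n^s}\mathrm m(\gamma)\,h_\gamma(a_1,\dots,a_n)$, and summing $\tfrac1{s!}\rho^{*s}(w)$ over $1\le s\le n$ yields precisely \eqref{monotone2}.

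For the final assertion I would argue by triangularity. Formula \eqref{monotone2} expresses $m_n(a_1,\dots,a_n)=\Phi(w)$ as $h_n(a_1,\dots,a_n)$ (the $s=1$, single-block term) plus a polynomial in the maps $h_k(a_{i_1},\dots,a_{i_k})$ with $k<n$; hence, by induction on $n$, it determines the family $(h_n)_{n\ge1}$ uniquely from the moments. The multivariate monotone cumulants of \cite{hasebesaigo_11,lehner-etal_15} are, by definition, the unique family obeying exactly this moment--cumulant relation --- equivalently, since a rooted forest on $s$ vertices admits $s!/\tau!$ linear extensions one has $\mathrm m(\gamma)=|\gamma|!/\tau(\gamma)!$, so \eqref{monotone2} coincides with the inverse-tree-factorial weighted form \eqref{Lehner1}. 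Therefore $h_n$ is the $n$-th multivariate monotone cumulant map, as claimed.

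I expect the only genuinely non-formal ingredient to be the combinatorial content already packaged in Lemma~\ref{lem:monotone1} --- that the left-iterated reduced linearised coproduct enumerates each monotone partition of $[n]$ into $s$ blocks exactly once, in the correct block order --- together with the bookkeeping step that reorganises the resulting sum over monotone partitions into a sum over non-crossing partitions weighted by the number $\mathrm m(\gamma)$ of their monotone labellings. Once these are granted, everything else is a routine unwinding of the convolution exponential and a standard triangularity argument.
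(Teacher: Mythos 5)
Your proposal is correct and follows essentially the same route as the paper: the paper leaves the proof implicit, but its intended argument is exactly your combination of Lemma~\ref{lem:monotone1} (left-iterated reduced linearised coproduct enumerating monotone partitions), the expansion of $\exp^*(\rho)$ with only the reduced part surviving because $\rho$ is an infinitesimal character, and the regrouping of monotone partitions over their underlying non-crossing partitions with multiplicity $\mathrm{m}(\gamma)$. Your closing triangularity/uniqueness argument for identifying the $h_n$ with the multivariate monotone cumulants matches the paper's appeal to the characterization in \cite{lehner-etal_15}.
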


From the well-known fact that $\mathrm{m}(\gamma) = \frac{s!}{\tau(\gamma)!}$ for $\gamma \in \mathcal{NC}_n^s$ (the set of non-crossing partitions of $[n]$ with $s$ blocks), it follows that
\begin{equation}
\label{monotone3}
	\Phi(w) =\sum\limits_{\gamma \in \mathcal{NC}_n} \frac{1}{\tau(\gamma)!} h_{\gamma}(a_1, \ldots, a_n).
\end{equation}
These two equations (\ref{monotone2}) and (\ref{monotone3}) appeared in \cite{lehner-etal_15}. They were shown to characterize multivariate monotone cumulants (using the definition of the $h_n$ in terms of monotone partitions by using M\"obius inversion techniques). From this the last part of the theorem follows.

The insight we want to convey here is that the shuffle product and the shuffle exponential $\exp^*$ provide an alternative, compact and algebraic description of such lattice-type sums.

Conversely, the shuffle logarithm permits to express monotone cumulants $\rho = \log^*(\Phi)$ in terms of moments, i.e., for the word $w=a_1 \cdots a_n \in T(A)$ we find 
\begin{equation}
\label{monotone4}
	h_n(a_1, \ldots, a_n) = \rho(w) =\log^*(\Phi)(w) = \sum_{l=1}^n \frac{(-1)^l}{l} (\Phi \circ P)^{*l} (w), 
\end{equation}
where $P:= \id - u$ and $\Phi \circ P = \Phi - \epsilon$, such that $\Phi \circ P (\un)=0$ and $\Phi \circ P (w)=\Phi(w)$ for a word $w \neq \un$.


\subsection{Boolean cumulants as infinitesimal characters}
\label{ssec:booleancumulants}

Going back to Theorem \ref{thm:HA}, in \cite{ebrahimipatras_15} it was shown that the splitting of the coproduct \eqref{HA} 
\begin{equation}
\label{splitting}
	\Delta = \Delta^+_{\prec} + \Delta^+_{\succ},
\end{equation}
into the left half-unshuffle coproduct
\begin{equation}
\label{HAprec+}
	\Delta^+_{\prec}(a_1 \cdots a_n) := \sum_{1 \in S \subseteq [n]} a_S \otimes a_{J^S_{[n]}},
\end{equation}
and the right half-unshuffle coproduct
\begin{equation}
\label{HAsucc+}
	\Delta^+_{\succ}(a_1 \cdots a_n) := \sum_{1 \notin S \subset [n]} a_S \otimes a_{J^S_{[n]}},
\end{equation}
implies the next result.

\begin{thm} \cite{ebrahimipatras_15} \label{thm:bialg}
The Hopf algebra $H=\overline T(T(A))$ equipped with $\Delta^+_{\succ}$ and $\Delta^+_{\prec}$ is a unital unshuffle bialgebra. 
\end{thm}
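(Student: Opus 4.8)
The plan is to verify directly that the two coproducts $\Delta^+_{\prec}$ and $\Delta^+_{\succ}$ on $H^+ = T(T(A))$ satisfy the three unshuffle (codendriform) coalgebra axioms, and then to check the compatibility with the concatenation product $|$ that makes the whole structure a unital unshuffle bialgebra. First I would record the ``dual'' bookkeeping: the axioms for a codendriform coalgebra are obtained by dualizing the three dendriform axioms, so what must be shown is
\begin{align*}
	(\Delta^+_{\prec} \otimes \id) \circ \Delta^+_{\prec} &= (\id \otimes \Delta) \circ \Delta^+_{\prec},\\
	(\Delta^+_{\succ} \otimes \id) \circ \Delta^+_{\prec} &= (\id \otimes \Delta^+_{\prec}) \circ \Delta^+_{\succ},\\
	(\Delta \otimes \id) \circ \Delta^+_{\succ} &= (\id \otimes \Delta^+_{\succ}) \circ \Delta^+_{\succ},
\end{align*}
together with $\Delta = \Delta^+_{\prec} + \Delta^+_{\succ}$, which is exactly the splitting \eqref{splitting} already established, and the counit behaviour on the two half-coproducts.

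The key computational step is to expand each side of these identities on a word $w = a_1 \cdots a_n \in T(A)$ using the explicit formulas \eqref{HAprec+} and \eqref{HAsucc+}. Iterating $\Delta$ and its halves produces sums indexed by nested chains of subsets $S \subseteq U \subseteq [n]$, where the ``$J$'' notation $a_{J^S_U}$ introduced before Theorem \ref{thm:HA} is precisely what is needed to describe the connected components that appear after the first extraction. The only genuine content is to track \emph{which} subset is required to contain the element $1$ (for the $\prec$ slots) versus to exclude it (for the $\succ$ slots), and to observe that in each of the three axioms the two sides are reindexed by the same set of flags $\{1\} \subseteq S_1 \subseteq S_2 \subseteq [n]$ (or the appropriate variant with $1 \notin S_1$), with the tensor factors matching componentwise. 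This is the standard argument showing that $T(T(A))$ is a combinatorial incarnation of the free unshuffle bialgebra on $A$; the bookkeeping is a little delicate but entirely mechanical once the convention for $a_{J^S_U}$ is in place.

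Next I would check the bialgebra compatibility: the concatenation product $|$ must be a morphism of unshuffle coalgebras in the appropriate graded sense, i.e.\ $\Delta^+_{\prec}$ and $\Delta^+_{\succ}$ must each be compatible with $|$ via the usual Hopf-type compatibility inherited from $\Delta$ (which is already multiplicative by Theorem \ref{thm:HA}), with the half-coproduct of a product $a|b$ governed by the rule that the left-half part is controlled by the first tensor factor of $a$. Concretely, since $1 \in [n]$ lies in the first word $w_1$ of a bar-symbol $w_1 | \cdots | w_k$, the extraction defining $\Delta^+_{\prec}$ always ``sees'' $w_1$, and one gets $\Delta^+_{\prec}(a|b) = \Delta^+_{\prec}(a) \ast (\Delta(b))$-type identities in the convolution sense; these follow from \eqref{HA}--\eqref{HAsucc+} and the already-known multiplicativity of $\Delta$. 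Finally the unital axioms are immediate: $\Delta^+_{\prec}(w) = w \otimes \un$ plus lower-filtration terms for $w \in T(A)$, and $\un$ behaves as required, so $(H, |, \Delta^+_{\prec}, \Delta^+_{\succ})$ is a unital unshuffle bialgebra.

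I expect the main obstacle to be purely notational rather than conceptual: keeping the iterated half-coproducts straight requires careful handling of the fact that $\overline\Delta$ (and hence its halves) is \emph{not} coassociative, so one must consistently use left iteration and the $a_{J^S_U}$ formalism to describe the connected components of $U - S$ relative to $U$ at each stage. The substantive mathematical point — that the membership condition ``$1 \in S$'' splits $\Delta$ into pieces satisfying the codendriform axioms — is already morally contained in the splitting \eqref{splitting} established in \cite{ebrahimipatras_15}; the remaining work is to confirm that the nested-subset indexing on both sides of each axiom genuinely coincides. There are no deep estimates or limiting arguments: once the indexing sets are identified, the theorem follows by inspection.
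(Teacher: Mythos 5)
The paper offers no proof of this statement: Theorem~\ref{thm:bialg} is quoted from \cite{ebrahimipatras_15}, so there is no internal argument to compare against. Your plan --- direct verification of the three codendriform axioms by expanding the iterated half-coproducts on a word $a_1\cdots a_n$ and matching the flags of subsets distinguished by whether they contain the element $1$, followed by the multiplicative compatibility with the bar-product --- is exactly the strategy of the cited reference, and it does go through. One caution: you have written the coassociativity-type axioms with the unital half-coproducts $\Delta^+_{\prec}$, $\Delta^+_{\succ}$, whereas the axioms \eqref{C1}--\eqref{C3} hold for the \emph{reduced} half-coproducts $\Delta_{\prec}$, $\Delta_{\succ}$ on the augmentation ideal; the unit terms $a\otimes\un$ and $\un\otimes a$ of \eqref{D3}--\eqref{D4} do not satisfy those identities literally (e.g.\ $\Delta^+_{\prec}\otimes\id$ applied to $a\otimes\un$ already produces an extra $a\otimes\un\otimes\un$ on the left of your first identity that is absent from the right), so the verification must be carried out on the reduced pieces and the unit adjoined afterwards via the conventions \eqref{unit-dend}. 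Also, your worry about non-coassociativity and forced left-iteration concerns the reduced \emph{linearised} coproduct $\overline\Delta$ used in the monotone computation, not the full $\bar\Delta=\Delta_\prec+\Delta_\succ$, which is genuinely coassociative; it is not an obstacle here.
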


Recall that unshuffle coalgebras and unshuffle bialgebras are defined as follows.
 
\begin{defn}
A counital unshuffle coalgebra is a coaugmented coalgebra $\overline C = C \oplus k.\un$ with coproduct
\begin{equation}
\label{codend}
	\Delta(c) := \bar\Delta(c) + c \otimes \un + \un \otimes c,
\end{equation}
such that on $C$, $\bar\Delta = \Delta_{\prec} + \Delta_{\succ}$ with 
\begin{eqnarray}
	(\Delta_{\prec} \otimes I) \circ \Delta_{\prec}   &=& (I \otimes \bar\Delta)\circ \Delta_{\prec}        	\label{C1}\\
  	(\Delta_{\succ} \otimes I) \circ \Delta_{\prec}   &=& (I \otimes \Delta_{\prec})\circ \Delta_{\succ} 	\label{C2}\\
   	(\bar\Delta \otimes  I) \circ \Delta_{\succ}         &=& (I \otimes \Delta_{\succ})\circ \Delta_{\succ}   \label{C3},
\end{eqnarray}
where $\Delta_{\prec}$ and $\Delta_{\succ}$ are called respectively left and right half-unshuffle coproducts.
\end{defn}

\begin{defn}
An unshuffle bialgebra is a unital and counital bialgebra $\overline B=B \oplus k.\un$ with product $\cdot_B$ and coproduct $\Delta$. At the same time $\overline B$ is a counital unshuffle coalgebra with $\bar\Delta = \Delta_{\prec} + \Delta_{\succ}$ such that the following compatibility relations hold 
\begin{eqnarray}
	\Delta^+_{\prec}(a \cdot_B b)  &=& \Delta^+_{\prec}(a)  \cdot \Delta(b)      	\label{D1}\\
  	\Delta^+_{\succ}(a \cdot_B b)  &=& \Delta^+_{\succ}(a)  \cdot \Delta(b),    	\label{D2}
\end{eqnarray}
where
\begin{eqnarray}
	\Delta^+_{\prec}(a)  &:=& \Delta_{\prec}(a) + a \otimes \un     	\label{D3}\\
  	\Delta^+_{\succ}(a)  &:=& \Delta_{\succ}(a) + \un \otimes a.     	\label{D4}
\end{eqnarray}
\end{defn}

Dualizing the coproduct splitting \eqref{splitting} leads to a splitting of the corresponding convolution product $f * g = m_k \circ (f \otimes g) \circ \Delta = f \succ g + f \prec g,$ where $f,g \in Lin(H^+,k)=Lin(T(T(A)),k)$, into left and right half-shuffle convolution products
$$
	f \prec g := m_k \circ (f\otimes g)\circ \Delta_\prec ,
\qquad\
	f \succ g := m_k \circ (f\otimes g)\circ \Delta_\succ .
$$
Half convolutions with the counit $\epsilon$ are defined by 
\begin{equation}
\label{unit-check}
	f\prec \epsilon =f=\epsilon\succ f,\ \ \epsilon\prec f=0=f\succ\epsilon .
\end{equation}

\begin{prop} \cite{ebrahimipatras_15}
The space ${\mathcal L}_A:=(Lin(H,k), \prec, \succ)$ is a unital shuffle algebra.
\end{prop}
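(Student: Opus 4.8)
The statement to prove is that $\mathcal{L}_A = (\mathrm{Lin}(H,k), \prec, \succ)$ is a unital shuffle (dendriform) algebra, given that $H = \overline{T(T(A))}$ is a unital unshuffle (codendriform) bialgebra (Theorem \ref{thm:bialg}).

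My plan is to derive the shuffle (dendriform) algebra axioms on $\mathcal{L}_A$ by dualizing the counital unshuffle coalgebra axioms \eqref{C1}--\eqref{C3}.

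\medskip

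\textbf{Strategy.} The plan is to show that the three dendriform axioms for $(\prec,\succ)$ on $\mathcal{L}_A$ --- namely
\begin{align*}
	(f \prec g) \prec h &= f \prec (g \ast h), \\
	(f \succ g) \prec h &= f \succ (g \prec h), \\
	(f \ast g) \succ h &= f \succ (g \succ h),
\end{align*}
where $f \ast g = f \prec g + f \succ g$ --- follow termwise from the coassociativity-type identities \eqref{C1}, \eqref{C2}, \eqref{C3} of the unshuffle coalgebra structure on $H$, together with the bookkeeping of the unit/counit via \eqref{unit-check}. Concretely, I would first observe that for $f, g, h \in \mathrm{Lin}(H,k)$ and the restriction of the coproducts to $H^+$, one has identities such as
$$
	(f \prec g) \prec h = m_k^{[2]} \circ (f \otimes g \otimes h) \circ (\Delta_\prec \otimes \id) \circ \Delta_\prec,
$$
and similarly the right-hand side $f \prec (g \ast h) = m_k^{[2]} \circ (f \otimes g \otimes h) \circ (\id \otimes \bar\Delta) \circ \Delta_\prec$. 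These two are equal precisely by \eqref{C1}. The other two axioms follow the same pattern from \eqref{C2} and \eqref{C3}. Since $m_k$ (multiplication in the ground field) is associative and commutative, the iterated products $m_k^{[2]}$ on both sides genuinely agree, so no subtlety arises there.

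\medskip

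\textbf{Key steps, in order.} (1) Record the definitions of $\prec$, $\succ$ on $\mathrm{Lin}(H^+,k)$ in terms of $\Delta_\prec$, $\Delta_\succ$ (already done in the excerpt), and note $\ast = \prec + \succ$ corresponds to $\bar\Delta = \Delta_\prec + \Delta_\succ$. (2) Extend everything to $\mathcal{L}_A = \mathrm{Lin}(H,k)$ by incorporating the counit $\epsilon$ as the unit, using the conventions \eqref{unit-check}; here I would check that these conventions are exactly what is forced by the coaugmentation $\overline{C} = C \oplus k.\un$ and the formula \eqref{codend}, i.e.\ that $\Delta(c) = \bar\Delta(c) + c \otimes \un + \un \otimes c$ dualizes to $f \ast g = f \prec g + f \succ g$ with $\epsilon \prec f = 0 = f \succ \epsilon$ and $f \prec \epsilon = f = \epsilon \succ f$. (3) Verify each of the three dendriform axioms by dualizing \eqref{C1}, \eqref{C2}, \eqref{C3} respectively, being careful that left-iteration on the coalgebra side matches the correct parenthesization on the algebra side. (4) Handle the ``unital'' part: check that $\epsilon$ acts as a two-sided unit for $\ast$ and that the unital dendriform axioms involving $\epsilon$ (the degenerate cases where one argument is $\epsilon$) hold --- these reduce to the relations \eqref{unit-check} and are immediate.

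\medskip

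\textbf{Main obstacle.} The only genuine point requiring care is step (2)/(4): the passage from the \emph{non-unital} shuffle algebra structure on $\mathrm{Lin}(H^+, k)$ to the \emph{unital} one on $\mathcal{L}_A = \mathrm{Lin}(H,k)$. One must confirm that the ad hoc conventions $f \prec \epsilon = f$, $\epsilon \succ f = f$, $\epsilon \prec f = 0$, $f \succ \epsilon = 0$ are not merely consistent but are precisely the ones that make the three dendriform axioms extend without exception to all of $\mathcal{L}_A$, including the cases where one or more of $f,g,h$ equals $\epsilon$. This is a finite case check (by how many of the arguments are $\epsilon$), but it is where an error would most likely hide; everything else is a mechanical transpose of \eqref{C1}--\eqref{C3}. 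I would also remark that this is exactly the standard duality between counital unshuffle coalgebras and unital shuffle algebras, so the proof can legitimately be compressed to ``dualize \eqref{C1}--\eqref{C3} and use \eqref{unit-check}.''
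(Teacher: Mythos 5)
Your proposal is correct and is precisely the intended argument: the paper states this proposition without proof (citing the earlier work), and the verification there is exactly your dualization of the unshuffle coalgebra axioms \eqref{C1}--\eqref{C3} into the shuffle axioms \eqref{A1}--\eqref{A3} via $m_k^{[2]}\circ(f\otimes g\otimes h)$ composed with the iterated half-coproducts, together with the unit conventions \eqref{unit-check}. The only detail worth noting in your step (4) is that, as the paper remarks around \eqref{unit-dend}, the products $\epsilon\prec\epsilon$ and $\epsilon\succ\epsilon$ are deliberately left undefined, so your finite case check should exclude those combinations rather than attempt to verify them.
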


Further below in Section \ref{sec:shuffle} we will recollect the definition and basic results on unital shuffle algebras. Here we recall that the splitting of the coproduct and the resulting shuffle products can be used to define additional shuffle-type exponentials, beside the shuffle exponential $\exp^*: g(A) \to G(A)$. Indeed, there exist two more bijections between $G(A)$ and $g(A)$. They are defined in terms of the left and right half-shuffle, or ``time-ordered'', exponentials 
$$
	\exp^{\prec}(\alpha) := \epsilon + \sum_{n > 0} \alpha^{\prec{n}} 
	\qquad\
	\exp^{\succ}(\alpha):= \epsilon + \sum_{n > 0}  \alpha^{\succ n},
$$
where $\alpha \in g(A)$, $\alpha^{\prec{n}}:=\alpha \prec (\alpha^{\prec{n-1}})$ and $\alpha^{\succ{n}} := (\alpha^{\succ{n-1}}) \succ \alpha$, with $\alpha^{\prec{0}} := \epsilon =: \alpha^{\succ{0}}$.

\begin{thm} \cite{ebrahimipatras_15}\label{thm:Gg}
Let $\Phi \in G(A)$. There exists a unique $\kappa  \in g(A)$ such that 
\begin{equation}
\label{recursion1}
	\Phi = \epsilon + \kappa\prec \Phi,
\end{equation}
and conversely, for $\kappa\in g(A)$ the map $\Phi:=\exp^{\prec}( \kappa)$ is a character. Analogously, for $\Psi \in G(A)$, there exists a unique $\alpha  \in g(A)$ such that 
\begin{equation}
\label{recursion2}
	\Psi = \epsilon + \Psi \succ \alpha ,
\end{equation}
and conversely, for $\alpha \in g(A)$ the map $\Psi:=\exp^{\succ}(\alpha)$ is a character. 
\end{thm}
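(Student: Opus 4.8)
The plan is to prove the four assertions by reducing everything to a single fixed-point/recursion argument, run once for $\prec$ and once (dually) for $\succ$. First I would treat the recursion \eqref{recursion1}: given $\Phi \in G(A)$, I want a unique $\kappa \in g(A)$ with $\Phi = \epsilon + \kappa \prec \Phi$. The key observation is that $H = \overline T(T(A))$ is connected graded, so every element of $H^+$ lives in finite positive degree, and the half-unshuffle coproduct $\Delta_\prec$ strictly lowers degree in its left tensor factor (indeed $\Delta_\prec(a_1\cdots a_n) = \sum_{1\in S \subsetneq [n]} a_S \otimes a_{J^S_{[n]}}$ plus the primitive term $a_{[n]} \otimes \un$, but on $H^+$ the nonprimitive terms have a left leg of strictly smaller degree). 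So I would solve $\Phi = \epsilon + \kappa\prec\Phi$ degree by degree: reading off the degree-$n$ component, one gets $\kappa|_n$ expressed in terms of $\Phi|_n$ and the values of $\kappa$ and $\Phi$ in degrees $< n$, using $f \prec \epsilon = f$ from \eqref{unit-check} to isolate the term $\kappa|_n \prec \epsilon = \kappa|_n$. This gives existence and uniqueness of $\kappa$ as a linear form simultaneously. It remains to check $\kappa \in g(A)$, i.e.\ $\kappa(w_1|w_2)=0$ for $w_1,w_2 \in H^+$ and $\kappa(\un)=0$; the second is immediate, and the first I would get from the compatibility \eqref{D1} between $\Delta^+_\prec$ and the concatenation product together with the fact that $\Phi$ is a character, so that the recursion restricted to a concatenation $w_1|w_2$ is automatically satisfied with $\kappa$ contributing nothing in positive bidegree — this is the step I expect to require the most care.

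Next, the converse: given $\kappa \in g(A)$, I would show $\Phi := \exp^\prec(\kappa) = \epsilon + \sum_{n>0}\kappa^{\prec n}$ is well defined (the sum is locally finite by connected gradedness, since each $\kappa^{\prec n}$ vanishes below degree $n$ as $\kappa$ does so below degree $1$), that it solves the recursion (immediate from $\kappa^{\prec n} = \kappa \prec \kappa^{\prec(n-1)}$ and distributivity of $\prec$ over $+$, giving $\Phi = \epsilon + \kappa \prec \Phi$), and that it is a character. Multiplicativity is the substantive point: I would invoke the general unshuffle-bialgebra identity — available since Theorem \ref{thm:bialg} says $H$ is a unital unshuffle bialgebra — that the left half-shuffle exponential of an infinitesimal character is a character. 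Concretely, one argues by induction on total degree that $\Phi(w_1|w_2) = \Phi(w_1)\Phi(w_2)$, expanding both sides via \eqref{recursion1} and \eqref{D1}: $\Phi(w_1|w_2) = \epsilon(w_1|w_2) + (\kappa\prec\Phi)(w_1|w_2)$, then use $\Delta^+_\prec(w_1 \cdot_B w_2) = \Delta^+_\prec(w_1)\cdot\Delta(w_2)$ to split the coproduct, apply $\kappa \in g(A)$ to kill cross-terms, and close the induction. (Alternatively one may cite Lemma-level results from the shuffle-algebra section promised in Section \ref{sec:shuffle}, where $\exp^\prec$ is identified as the inverse of a half-shuffle logarithm and characters are characterized by \eqref{recursion1}; but the self-contained inductive argument is cleaner here.)

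Finally, the $\succ$-statements follow by the left-right mirror symmetry of the unshuffle axioms \eqref{C1}--\eqref{C3} and the compatibility \eqref{D2}: replacing $\prec \leftrightarrow \succ$ and $\Delta_\prec \leftrightarrow \Delta_\succ$, and using $\epsilon \succ f = f$ in place of $f \prec \epsilon = f$, the recursion $\Psi = \epsilon + \Psi \succ \alpha$ is solved degree by degree for a unique $\alpha$, and $\exp^\succ(\alpha)$ is shown to be a character exactly as above with $\Delta^+_\succ(a\cdot_B b) = \Delta^+_\succ(a)\cdot\Delta(b)$ replacing \eqref{D1}. The only asymmetry to watch is that $\Delta_\succ$ lowers degree in the \emph{right} leg, so the recursion \eqref{recursion2} must be read with $\alpha$ sitting on the right of $\succ$, which is precisely how it is stated. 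The main obstacle throughout is verifying that the constructed linear form is genuinely an infinitesimal character and that the half-shuffle exponential is genuinely multiplicative — everything else is bookkeeping with the connected grading — and both of these hinge on the unshuffle-bialgebra compatibility relations \eqref{D1}--\eqref{D2} supplied by Theorem \ref{thm:bialg}.
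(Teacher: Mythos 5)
Your proposal is correct: the degree-by-degree solution of the fixed-point equations using the connected grading, combined with the unshuffle-bialgebra compatibilities \eqref{D1}--\eqref{D2} to verify that the solution is an infinitesimal character and that $\exp^{\prec}(\kappa)$, $\exp^{\succ}(\alpha)$ are multiplicative, is exactly the standard argument. The present paper does not reprove this theorem (it is imported from \cite{ebrahimipatras_15}), but your reasoning reproduces the proof given there in essentially the same way.
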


\begin{thm} \cite{ebrahimipatras_15}\label{tim:freeprob}
Let $(A,\phi)$ be a non-commutative probability space with unital map $\phi: A \to k$ and $\Phi$ its extension to $\overline T(T(A))$ as a character. Let the map $\kappa: \overline T(T(A))\to k$ be the infinitesimal character solving $\Phi = \epsilon + \kappa\prec \Phi$. For $w=a_1 \cdots a_n  \in T(A)$ we set $k_n(a_1, \ldots ,a_n ):=\kappa(w)$, $m_n(a_1, \ldots, a_n ):=\Phi(w)$. Then $m_n(a_1, \ldots, a_n )$ and $k_n(a_1, \ldots, a_n )$ satisfy the moment-cumulant relation in free probability
\begin{equation}
\label{free}
	m_n(a_1, \ldots, a_n )=\sum_{\pi \in \mathcal{NC}_n} k_\pi(a_1, \ldots, a_n ).
\end{equation}
\end{thm}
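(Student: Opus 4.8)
The plan is to prove \eqref{free} by induction on the length $n$ of the word $w = a_1 \cdots a_n$, using the fixed point equation \eqref{recursion1} to peel off the block of a non-crossing partition that contains the first letter.

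First I would make the recursion explicit. Since $\kappa \prec \Phi = m_k \circ (\kappa \otimes \Phi) \circ \Delta_\prec$ and the left half-unshuffle coproduct on $H^+$ is given by \eqref{HAprec+}, evaluating \eqref{recursion1} on a word $w = a_1 \cdots a_n$ of length $n \geq 1$ (so that $\epsilon(w) = 0$) yields
\begin{equation*}
	\Phi(a_1 \cdots a_n) = \sum_{1 \in S \subseteq [n]} \kappa(a_S)\, \Phi\bigl(a_{J^S_{[n]}}\bigr)
	= \sum_{1 \in S \subseteq [n]} \kappa(a_S) \prod_{i=1}^{l} \Phi(a_{J_i}),
\end{equation*}
where $J_1, \ldots, J_l$ are the connected components of $[n] - S$ relative to $[n]$, and the last equality uses that $\Phi$ is a character. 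This is the identity I want to match on the combinatorial side.

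Then I would invoke the recursive structure of the non-crossing partition lattice. Given $\pi \in \mathcal{NC}_n$, let $S$ be the block of $\pi$ containing $1$. Because $\pi$ is non-crossing and $1 \in S$, no other block of $\pi$ can meet two distinct gaps of $S$ (such a configuration would cross $S$), so each block $B \neq S$ lies in a single connected component $J_i$ of $[n] - S$; conversely, any $S \ni 1$ together with a choice of non-crossing partition of each $J_i$ reassembles into an element of $\mathcal{NC}_n$. Hence $\pi \mapsto (S, \pi|_{J_1}, \ldots, \pi|_{J_l})$ is a bijection from $\mathcal{NC}_n$ onto the disjoint union over $S \ni 1$ of $\mathcal{NC}(J_1) \times \cdots \times \mathcal{NC}(J_l)$, and under it $k_\pi(w) = k_{|S|}(a_S) \prod_i k_{\pi|_{J_i}}(a_{J_i})$. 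Summing over $\mathcal{NC}_n$, identifying $\mathcal{NC}(J_i)$ with $\mathcal{NC}_{|J_i|}$, and applying the induction hypothesis to each $a_{J_i}$ (valid since $|J_i| \leq n-1$; the term $S = [n]$ contributes $k_n(a_1,\ldots,a_n)$ directly) gives
\begin{equation*}
	\sum_{\pi \in \mathcal{NC}_n} k_\pi(w) = \sum_{1 \in S \subseteq [n]} \kappa(a_S) \prod_{i=1}^{l} \Phi(a_{J_i}) = \Phi(w).
\end{equation*}
The base case $n = 1$ is immediate: $\mathcal{NC}_1$ has a single element, and \eqref{recursion1} forces $\Phi(a_1) = \kappa(a_1)$.

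The step I expect to require the most care is the combinatorial bijection above: one must check that the connected components of $[n] - S$ occurring in \eqref{HAprec+} are exactly the intervals on which a non-crossing partition with first block $S$ decomposes, and that no crossing between the $S$-part and the remaining blocks can arise. This is the non-crossing analogue of a first-return decomposition and is classical (see e.g.\ \cite{nicaspeicher_06}); everything else is routine bookkeeping. Alternatively, one could bypass the induction by expanding $\Phi = \exp^{\prec}(\kappa)$ from Theorem \ref{thm:Gg}, iterating the half-shuffle relation \eqref{C1} to unfold $\kappa^{\prec n}(w)$, and matching the resulting sum over left-nested interval decompositions with $\mathcal{NC}_n$ graded by number of blocks; the recursive argument is the shorter route.
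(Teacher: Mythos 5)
Your proof is correct; note that the paper itself states this theorem without proof, importing it from \cite{ebrahimipatras_15}, where the argument is exactly the one you give. Unfolding $\Phi=\epsilon+\kappa\prec\Phi$ on a word via \eqref{HAprec+}, using multiplicativity of $\Phi$ on the bars $a_{J_1}|\cdots|a_{J_l}$, and matching the result with the first-block decomposition of $\mathcal{NC}_n$ (which is Speicher's recursive form of the free moment--cumulant relation) is the intended and standard route, and your handling of the base case and of the term $S=[n]$ is right.
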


The next theorem captures the moment-cumulant relation in the boolean case. See \cite{speicher_97c} for details on the boolean setting.

\begin{thm} \label{tim:boolean}
Let $(A,\phi)$ be a non-commutative probability space with unital map $\phi: A \to k$ and $\Phi$ its extension to $\overline T(T(A))$ as a character. Let the map $\beta: \overline T(T(A))\to k$ be the infinitesimal character solving $\Phi = \epsilon + \Phi \succ \beta$. For $w=a_1 \cdots a_n \in T(A)$  we set $r_n(a_1, \ldots, a_n ):=\beta(w)$, and $m_n(a_1, \ldots ,a_n ):=\Phi(w)$. Then
\begin{equation}
\label{boolean}
	m_n(a_1, \ldots , a_n )=\sum_{I \in \mathcal{B}_n} r_I(a_1, \ldots, a_n),
\end{equation}
where $\mathcal{B}_n$ is the boolean lattice of interval partitions. In particular, the $r_n(a_1, \ldots, a_n)$ identify with the (multivariate) boolean cumulants.
\end{thm}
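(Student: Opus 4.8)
The plan is to run exactly the same recursive argument that was used for monotone cumulants in Theorem~\ref{tim:monotone} and for free cumulants in Theorem~\ref{tim:freeprob}, but now starting from the right half-shuffle fixed point equation \eqref{recursion2}. First I would unfold $\Phi = \epsilon + \Phi \succ \beta$ on a word $w = a_1 \cdots a_n \in T(A)$. Since $\beta$ is an infinitesimal character, it annihilates $\un$ and all bar-words $w_1 | \cdots | w_j$ with $j \geq 2$, so only the ``reduced linearised'' part of the right half-unshuffle coproduct $\Delta^+_{\succ}$ contributes. Dualizing \eqref{HAsucc+}, this reduced part picks out, for a word $a_1 \cdots a_n$, the terms $a_S \otimes a_I$ where $I \subseteq [n]$ is a nonempty \emph{interval} not containing $1$ and $S = [n] - I$ is itself an interval (otherwise $a_S$ is a genuine bar-word of length $\geq 2$, killed by the character $\Phi$ in the next step only if it further splits — here it is $\Phi$ that evaluates it, so actually $S$ may be any complement, but $\Phi$ multiplicativity already handles that). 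Writing this out: since $1 \notin I$ and $I$ is an interval inside $[n]$, necessarily $I = \{k+1, \ldots, n\}$ for some $1 \leq k \leq n-1$, i.e. $I$ is a \emph{final} interval, and $S = \{1, \ldots, k\}$.

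Hence the recursion \eqref{recursion2} becomes, on $w = a_1 \cdots a_n$,
\[
	\Phi(a_1 \cdots a_n) = \sum_{k=1}^{n} \Phi(a_1 \cdots a_k)\, \beta(a_{k+1} \cdots a_n)
	= \sum_{k=1}^{n} m_k(a_1,\ldots,a_k)\, r_{n-k}(a_{k+1},\ldots,a_n),
\]
with the convention that the $k=n$ term reads $r_n(a_1,\ldots,a_n)$ (coming from the $\epsilon$ summand together with the top piece). This is precisely the recursive form of the boolean moment-cumulant relation. The next step is to iterate: expanding $m_k(a_1,\ldots,a_k)$ again by the same formula and continuing until all moment factors are exhausted produces, by an obvious induction on $n$, a sum over all ways of cutting $[n]$ into consecutive blocks, i.e. over the boolean lattice $\mathcal{B}_n$ of interval partitions, with each block $I_j$ contributing the factor $r_{|I_j|}(a_{I_j})$. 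Writing $r_I(a_1,\ldots,a_n) := \prod_{j} r_{|I_j|}(a_{I_j})$ for $I = I_1 \sqcup \cdots \sqcup I_p \in \mathcal{B}_n$, this yields exactly \eqref{boolean}. Finally, since the boolean moment-cumulant relation \eqref{boolean} is well known to characterize the boolean cumulants uniquely (it is triangular with respect to the refinement order on $\mathcal{B}_n$, so it can be inverted by M\"obius inversion on the boolean lattice), the infinitesimal character $\beta$ must coincide with the boolean cumulant functional, which gives the last sentence of the theorem.

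The only genuinely delicate point is the first one: correctly identifying the reduced linearised form of $\Delta^+_{\succ}$, i.e. checking that the constraint ``$I$ an interval, $1 \notin I$, and $[n]-I$ giving a single tensor factor in $T(A)$ after $\Phi$ is applied'' forces $I$ to be the final interval $\{k+1,\ldots,n\}$ — here one has to be careful to separate what $\beta$ kills (genuine bar-words on the right slot) from what the multiplicativity of $\Phi$ absorbs (products of moments on the left slot), exactly as in the dendriform computation behind \eqref{recursion2}. Once the recursion is in the displayed one-cut form above, the remaining induction is routine and parallels verbatim the boolean analogue of the non-crossing-partition bookkeeping in the proof of Theorem~\ref{tim:freeprob}. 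I would also remark that the same statement follows abstractly from Theorem~\ref{thm:Gg}: $\Phi = \exp^{\succ}(\beta)$, and expanding the right half-shuffle exponential $\exp^{\succ}(\beta) = \epsilon + \sum_{n>0}\beta^{\succ n}$ against a word of length $n$ reproduces the sum over interval partitions term by term, since $\beta^{\succ p}$ evaluated on $a_1\cdots a_n$ collects precisely the ordered cuts of $[n]$ into $p$ consecutive blocks.
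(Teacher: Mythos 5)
Your overall strategy is the paper's: unfold the right half-shuffle fixed point equation into a one-cut recursion and identify it with the boolean recurrence. But the step you yourself single out as the delicate one is carried out with the wrong orientation. In \eqref{HAsucc+} the condition $1\notin S$ sits on the \emph{left} tensor factor $a_S$, and in $\Phi\succ\beta=m_k\circ(\Phi\otimes\beta)\circ\Delta_{\succ}$ it is $\Phi$ that lands on $a_S$ while $\beta$ lands on the right factor $a_{J^S_{[n]}}$. Since $\beta$ is an infinitesimal character, the only surviving terms are those where $[n]-S$ is a single interval; and because $1\in[n]-S$, that interval is necessarily an \emph{initial} segment $\{1,\dots,j\}$, so $S=\{j+1,\dots,n\}$. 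The recursion that actually follows from $\Phi=\epsilon+\Phi\succ\beta$ is therefore
\[
  m_n(a_1,\dots,a_n)=\sum_{j=1}^{n} r_j(a_1,\dots,a_j)\,m_{n-j}(a_{j+1},\dots,a_n),
\]
with the cumulant on the initial block, not $\sum_k m_k(a_1,\dots,a_k)\,r_{n-k}(a_{k+1},\dots,a_n)$ as you wrote. Your intermediate inference ``$I$ an interval with $1\notin I$ forces $I=\{k+1,\dots,n\}$'' is also a non sequitur ($\{2,3\}\subset[5]$ is an interval not containing $1$); what forces the initial-segment shape is that the argument of $\beta$ is the \emph{complement} of $S$ and must be connected while containing $1$.

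The damage is limited: both recursions are triangular and generate the same sum over interval partitions (peeling blocks off the left end versus the right end), so formula \eqref{boolean} and the identification with boolean cumulants still come out correct. But as written, your displayed recursion is not a consequence of \eqref{recursion2}, so the derivation needs the two legs of the coproduct swapped before the rest of the induction (which is fine, as is your closing remark that $\beta^{\succ p}$ evaluated on a word enumerates the ordered $p$-block interval partitions) goes through.
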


\begin{proof}
The statement follows from the definition of $\Delta_{\succ}$ together with \eqref{unit-check}, which implies that for an arbitrary word $w=a_1 \cdots a_l \in T(A)$
$$
	 \Phi \succ \beta (w) = \sum_{j=1}^l \Phi(a_{j+1} \cdots a_l) \beta(a_1 \cdots a_{j}).
$$
This is due to the fact that $\beta$ is an infinitesimal character and therefore maps products as well as the unit to zero. Hence, we find that 
$$
	\Phi \succ \beta (w) = \sum_{j=1}^l m_{l-j}(a_{j+1}, \ldots, a_l) r_{j}(a_1, \ldots, a_{j}),
$$ 
which is the recurrence formula defining boolean cumulants in \cite{speicher_97c}. 
\end{proof}
	
So far we have shown that the three shuffle-type exponentials $\exp^*$, $\exp^{\prec}$ and $\exp^{\succ}$ defined on the  unital shuffle algebra ${\mathcal L}_A:=(Lin(H,k), \prec, \succ)$ respectively capture the moment-cumulant relations for monotone, free and boolean cumulants. In the monotone case the shuffle logarithm $\log^*$ permits to express monotone cumulants in terms of moments. In the next section we will use results from general shuffle algebra theory to derive the notions of left and right half-shuffle logarithm, which permit to express free and boolean cumulants in terms of the corresponding moments. Moreover, we will show that the three shuffle-type exponentials are tightly related. This will allow us to relate, using shuffle algebra tools and identities, the infinitesimal characters corresponding to monotone, free and boolean cumulants.


\section{Shuffle algebra, exponentials and half-shuffle fixed point equations}
\label{sec:shuffle}

We used in the previous section elementary properties of unshuffle coproducts and shuffle products. We will need later more advanced properties and identities that are the subject of the present section.


\subsection{Shuffle and pre-Lie algebras}
\label{sub:shufflepreLie}

The notion of half-shuffles and related identities first appeared in the work of Eilenberg and MacLane in topology in the 1950's. They used relations \eqref{A1}-\eqref{A3} below to obtain the first conceptual proof of the associativity of shuffle products in topology, at the chain level. They also introduced the axioms for commutative shuffle algebras (that appear in homology). Their ideas were rediscovered several times in different contexts, see e.g. \cite{fpijac}. One finds very often in the literature the operadic terminology ``dendriform algebras''  and ``Zinbiel algebras'' instead of shuffle and commutative shuffle algebras, because free shuffle algebras have basis parametrized by rooted trees, respectively because commutative shuffle algebras are dual in the operadic sense to the Bloh--Cuvier notion of Leibniz algebra. However, having a basis of free algebras parametrized by rooted trees is not a characteristic and meaningful property (it also holds for Lie algebras, magmas, pre-Lie algebras, among others). We prefer to use the terminology of ``shuffles'' which is more informative, better grounded in both history and applications of the theory, and conveys immediately the basic and fundamental underlying intuition, i.e., the splitting of an associative or commutative product into two ``half-shuffle products''.

A shuffle algebra consists of a $k$-vector space $D$ together with two bilinear compositions $\prec$ and $\succ$ (the left respectively right half-shuffle products) satisfying the shuffle relations
\begin{eqnarray}
	(a\prec b)\prec c  &=& a\prec(b \shuffle c)        	\label{A1}\\
  	(a\succ b)\prec c  &=& a\succ(b\prec c)   		\label{A2}\\
   	a\succ(b\succ c)   &=& (a \shuffle b)\succ c        	\label{A3}.
\end{eqnarray}
Left and right half-shuffles are neither commutative nor associative. However, the shuffle product $m_\shuffle: D \otimes D \to D$, $m_\shuffle (a \otimes b) =: a \shuffle b $ is defined as a linear combination of both half-shuffles
\begin{equation}
	 a \shuffle b := a \succ b + a \prec b. \label{dendassoc}       
\end{equation}
It is  non-commutative in general and from the shuffle relations it follows that \eqref{dendassoc} is associative. A commutative shuffle algebra is defined by adding the relation $a\prec b = b \succ a$ for all $a,b \in D$, which implies that \eqref{dendassoc} becomes a commutative product. 

A simple example of commutative shuffle algebra is defined on the algebra $F$ of smooth functions on $\mathbb{R}$ with pointwise product. The left and right half-shuffle products are given in terms of the indefinite Riemann integral 
\begin{equation*}
	(f \prec g)(\tau):=f(\tau)\int_0^\tau g(s)ds
	\qquad\
	(f \succ g)(\tau):=\int_0^\tau f(s)ds g(\tau).
\end{equation*}
The shuffle relations encode integration by parts. Note that $(f \prec g)(\tau) = (g \succ f)(\tau)$. However, the latter does not hold in case of matrix or operator valued functions.

Left and right pre-Lie algebras \cite{ChaLiv_01,manchon_11} can be defined on any shuffle algebra. Indeed, on $(D,\prec,\succ)$ the two products
\begin{equation}
\label{def:prelie}
    a \rhd b:= a \succ b - b \prec a
    \qquad\
    a \lhd b:= a \prec b - b\succ a
\end{equation}
respectively satisfy the left and right pre-Lie identities
\begin{eqnarray*}
    (a\rhd b)\rhd c-a\rhd(b\rhd c)&=& (b\rhd a)\rhd c-b\rhd(a\rhd c),\\
    (a\lhd b)\lhd c-a\lhd(b\lhd c)  &=& (a\lhd c)\lhd b-a\lhd(c\lhd b), 
\end{eqnarray*}
which define left pre-Lie respectively right pre-Lie algebras on $D$. The left pre-Lie identity can be rewritten in terms of the map $L_{a\rhd }:D \to D$, $b \mapsto L_{a\rhd }b:=a\rhd b$, such that $L_{[a,b] \rhd }=[L_{a\rhd },L_{b\rhd }]$. The bracket on the left-hand side is defined by $[a,b]:=a\rhd b-b\rhd a$ and satisfies the Jacobi identity. An analogous statement holds in the right pre-Lie case. It turns out that the commutators obtained from the products \eqref{dendassoc} and \eqref{def:prelie} all define the same Lie algebra structure on $D$.  

An augmented shuffle algebra $\overline D := D \oplus k.\un$ contains a unit $\un$, such that
\begin{equation}
\label{unit-dend}
    a \prec \un := a =: \un \succ a
    \qquad\ 
    \un \prec a := 0 =: a \succ \un,
\end{equation}
implying $a\shuffle\un=\un\shuffle a=a$. Note that by convention, $\un \shuffle \un=\un$, but $\un \prec \un$ and $\un \succ \un$ may not be defined consistently with respect to shuffle relations. See \cite{chapoton_02} for more details. We define $x^{\prec{n}}:=x \prec (x^{\prec{n-1}})$ and $x^{\succ{n}} := (x^{\succ{n-1}}) \succ x$, with $x^{\prec{0}} := \un=: x^{\succ{0}}$. Let $L_{a \succ}(b) := a \succ b$ and $R_{\succ b} (a):=a \succ b$, and similarly for $R_{\prec b}$ and $L_{\prec b}$. We then obtain that $L_{a \succ} L_{b \succ} = L_{a \shuffle b \succ}$, $R_{\prec a} R_{\prec b} = R_{\prec b \shuffle a}$, and $L_{a \succ} - R_{\prec a} = L_{a \rhd}$, $R_{\prec b} - L_{b \succ} = R_{\lhd b}$.


\subsection{Exponential maps}
\label{sub:expo}

A preliminary remark is in order regarding convergence issues: they are left aside in the present paper since we deal implicitely with formal series expansions over free shuffle algebras (insuring the convergence in the formal sense) or with graded algebras (in which case formal power series expansions restrict to finite expansions in each degree). In pratice, ``let $D$ be a shuffle algebra'' means therefore till the end of the present section, ``let $D$ be a free or a graded connected (i.e.~with no degree zero component) shuffle algebra''.

Under these assumptions, in a unital shuffle algebra $(\overline D,\prec,\succ)$ both the shuffle exponential and logarithm are defined for $a \in D$ in terms of the associative shuffle product~(\ref{dendassoc})
\begin{equation}
\label{ExpLog}
	\exp^\shuffle(a):=\un + \sum_{n > 0} \frac{a^{\shuffle n}}{n!}  
	\qquad\ 
	\log^\shuffle(\un+a):=-\sum_{n>0}(-1)^n\frac{a^{\shuffle n}}{n}. 
\end{equation}

For $a \in D$ the left and right half-shuffle, or ``time-ordered'', exponentials 
$$
	\exp^{\prec}(a) := \un + \sum_{n > 0} a^{\prec{n}} 
	\qquad\
	\exp^{\succ}(a):=\un + \sum_{n > 0}  a^{\succ n} 
$$
are defined as solutions $X=\exp^{\prec}(a)$ and $Z=\exp^{\succ}(a)$ in $\overline D$ of the linear left respectively  right half-shuffle fixed point equations
\begin{equation}
\label{recursion}
	X=\un + a\prec X \quad\ \quad\ Z= \un + Z \succ a.
\end{equation}
See \cite{ebrahimimanchon_09} for details and background. 

\begin{lem}\label{lem:inverse-shuffle}\cite{ebrahimipatras_15}
Let $D$ be a shuffle algebra, and $\overline D$ its augmentation by the unit $\un$. For $x \in D$, the solutions of half-shuffle fixed point equations, $X:=\exp^{\prec}(x)$ and $Y:= \exp^{\succ}(-x)$, satisfy $Y \shuffle X= X \shuffle Y = \un$. In particular, $Y=X^{-1}=\sum_{n\geq 0}(-1)^n(X- \un)^{\shuffle n}$.
\end{lem}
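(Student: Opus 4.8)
The plan is to prove that $\exp^{\succ}(-x)$ is the shuffle-inverse of $\exp^{\prec}(x)$ by exploiting the defining fixed point equations \eqref{recursion} together with the shuffle relations \eqref{A1}--\eqref{A3} and the unit conventions \eqref{unit-dend}. Write $X := \exp^{\prec}(x)$, so $X = \un + x \prec X$, and $Y := \exp^{\succ}(-x)$, so $Y = \un - Y \succ x$. The goal is to show $Y \shuffle X = \un$; by a symmetric argument (or by uniqueness of two-sided inverses in the associative algebra $(\overline D, \shuffle)$) one then also gets $X \shuffle Y = \un$, and the final formula $Y = \sum_{n \geq 0}(-1)^n (X - \un)^{\shuffle n}$ is just the geometric series for the inverse of $\un + (X - \un)$, valid in the graded/formal setting.

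First I would compute $Y \shuffle X$ by substituting the fixed point equation for one of the two factors and expanding via $a \shuffle b = a \succ b + a \prec b$. Using $X = \un + x \prec X$ gives
\begin{equation*}
	Y \shuffle X = Y \shuffle \un + Y \shuffle (x \prec X) = Y + Y \succ (x \prec X) + Y \prec (x \prec X).
\end{equation*}
Now apply the shuffle relations: by \eqref{A2}, $Y \succ (x \prec X) = (Y \succ x) \prec X$, and by \eqref{A1}, $Y \prec (x \prec X) = (Y \prec (x \shuffle X))$... here I must be a little careful about the direction, so instead I would run the expansion the other way, substituting $Y = \un - Y \succ x$ into the left factor:
\begin{equation*}
	Y \shuffle X = \un \shuffle X - (Y \succ x) \shuffle X = X - (Y \succ x) \succ X - (Y \succ x) \prec X.
\end{equation*}
By \eqref{A3}, $(Y \succ x) \succ X = Y \succ (x \succ X)$, and by \eqref{A2}, $(Y \succ x) \prec X = Y \succ (x \prec X)$. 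Adding these, $(Y \succ x) \succ X + (Y \succ x) \prec X = Y \succ (x \succ X + x \prec X) = Y \succ (x \shuffle X) = Y \succ (X - \un)$, using $x \shuffle X = X - \un$ from $X$'s fixed point equation rewritten as $x \prec X = X - \un$ — wait, that only accounts for $x \prec X$. The clean route is to also expand $Y \shuffle X$ using $X$'s equation and match: from the two expansions one derives a single relation, namely $Y \shuffle X - \un = (\text{something}) \succ (Y \shuffle X - \un)$ or a left-half-shuffle analogue, and then invoke the fact that in a graded connected (or formal) shuffle algebra the only solution of $W = w \prec W$ or $W = W \succ w$ with $W$ having no constant term is $W = 0$ (equivalently, uniqueness in Theorem \ref{thm:Gg}/the fixed-point lemma). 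Setting $W := Y \shuffle X - \un$, this forces $W = 0$.

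The main obstacle is bookkeeping: getting the half-shuffle relations applied in the correct order so that the cross-terms telescope into an expression of the form (half-shuffle)$\,\circ\,(Y \shuffle X - \un)$ rather than something that mixes $X$ and $Y$ in an unresolvable way. The choice of which fixed point equation to substitute into which factor matters, and one may need to substitute into \emph{both} and compare. Once the computation is arranged so that $W = Y\shuffle X - \un$ satisfies a homogeneous linear half-shuffle fixed point equation with vanishing degree-zero part, the conclusion $W=0$ is immediate by degree induction (each relation \eqref{A1}--\eqref{A3} raises degree), giving $Y \shuffle X = \un$; the reverse identity and the geometric-series formula follow formally.
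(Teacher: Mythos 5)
Your overall strategy---deriving $Y \shuffle X = \un$ from the two fixed point equations together with the shuffle axioms---is viable, but as written the argument contains a concrete error and then leaves the decisive step unproven. The error: axiom \eqref{A3} reads $a\succ(b\succ c)=(a\shuffle b)\succ c$, so it yields $Y\succ(x\succ X)=(Y\shuffle x)\succ X$, \emph{not} $(Y\succ x)\succ X=Y\succ(x\succ X)$; the two differ by $(Y\prec x)\succ X$, a term your bookkeeping cannot absorb. Consequently the step ``adding these, $(Y\succ x)\succ X+(Y\succ x)\prec X=Y\succ(x\shuffle X)$'' fails, and the expansion does not close. You then acknowledge the difficulty and fall back on the assertion that ``one derives a single relation'' of the form $W=(\text{something})\prec W$ for $W=Y\shuffle X-\un$, to be killed by uniqueness/degree induction. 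But that homogeneous relation is precisely what has to be established, and it is never exhibited; a proof cannot rest on the claim that the bookkeeping can be arranged.

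The gap is fillable along your lines, and in fact only \eqref{A2} and the unit conventions \eqref{unit-dend} are needed. Write $X=\un+X^{+}$ and $Y=\un+Y^{+}$; the fixed point equations give $x\prec X^{+}=X^{+}-x$ and $Y^{+}\succ x=-x-Y^{+}$. Then
\begin{equation*}
Y^{+}\succ X^{+}=Y^{+}\succ\big(x+x\prec X^{+}\big)=Y^{+}\succ x+(Y^{+}\succ x)\prec X^{+}=-x-Y^{+}-(X^{+}-x)-Y^{+}\prec X^{+},
\end{equation*}
whence $Y\shuffle X-\un=X^{+}+Y^{+}+Y^{+}\prec X^{+}+Y^{+}\succ X^{+}=0$ directly, with no appeal to a uniqueness argument. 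Note that the paper's own proof takes yet another route: it expands $\exp^{\succ}(-x)\shuffle\exp^{\prec}(x)-\un$ as the double sum $\sum_{n+m\geq 1}(-1)^{n}\{x^{\succ n}\prec x^{\prec m}+x^{\succ n}\succ x^{\prec m}\}$ and uses \eqref{A2} to rewrite $x^{\succ n}\prec x^{\prec m}=x^{\succ (n-1)}\succ x^{\prec (m+1)}$ for $n>0$, so the two families of terms cancel telescopically. Either completion is short; your submission, as it stands, supplies neither.
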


\begin{proof}
Indeed
\begin{eqnarray*}
	\exp^{\succ}(-x) \shuffle \exp^{\prec}(x)- \un
	&=& \sum\limits_{n+m\geq 1}(-1)^n\big\{(x^{\succ n})\prec (x^{\prec m}) 
					+ (x^{\succ n})\succ (x^{\prec m})\big\}\\
	&=& \sum\limits_{n>0,m\geq 0}(-1)^n(x^{\succ n})\prec (x^{\prec m}) 
				+ \sum\limits_{n\geq 0,m> 0}(-1)^n(x^{\succ n})\succ (x^{\prec m}).
\end{eqnarray*}
Now, since $(-1)^n(x^{\succ n})\prec (x^{\prec m})=(-1)^n((x^{\succ n-1})\succ x)\prec (x^{\prec m})=(-1)^n(x^{\succ n-1})\succ (x^{\prec m+1})$. That $X^{-1}=\sum_{n\geq 0}(-1)^n(X- \un)^{\shuffle n}$ follows from $\un = X \shuffle X^{-1}$ which implies that the shuffle inverse of $X$ has to satisfy the shuffle fixed point equation $X^{-1} = \un - (X- \un) \shuffle X^{-1}$. From this the proof follows. 
\end{proof}

A key property of half-shuffle exponentials is that they can be inverted using an appropriate half-shuffle notion of logarithm. These results are embodied in the following lemma and definition (see also~\cite{ebrahimipatras_15}).

\begin{lem}\label{lem:inverse}
Let $D$ be a shuffle algebra, and $\overline D$ its augmentation by the unit $\un$. For $a \in D$ and $X := \un + a \prec X = \exp^\prec (a)$ it follows that
\begin{equation}
\label{left-halfshuffleLOG}
	a = (X- \un) \prec X^{-1}.
\end{equation}
Analogously, for $Z = \exp^\succ (b)$ it follows that $b = Z^{-1} \succ (Z - \un)$.
\end{lem}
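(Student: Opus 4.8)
The plan is to prove the identity \eqref{left-halfshuffleLOG} directly from the fixed point equation $X = \un + a \prec X$ by isolating $a$. Starting from $X - \un = a \prec X$, the natural move is to shuffle-multiply both sides on the right by $X^{-1}$, where $X^{-1}$ exists by Lemma \ref{lem:inverse-shuffle} and equals $\exp^\succ(-a)$ (taking $x = a$ there). This gives $(X - \un) \shuffle X^{-1} = (a \prec X) \shuffle X^{-1}$, so the whole point reduces to showing that $(a \prec X) \shuffle X^{-1} = a \prec X^{-1} \cdot \text{(something)}$ collapses correctly — more precisely, I want to show $(a \prec X) \shuffle X^{-1} = a$.

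The key computational step exploits the shuffle relation \eqref{A1}, namely $(a \prec b) \prec c = a \prec (b \shuffle c)$, together with the decomposition $\shuffle = \prec + \succ$ and relation \eqref{A2}, $(a \succ b)\prec c = a \succ (b \prec c)$. The idea is to expand $(a \prec X) \shuffle X^{-1}$ as $(a\prec X) \prec X^{-1} + (a \prec X) \succ X^{-1}$. The first summand is $a \prec (X \shuffle X^{-1}) = a \prec \un = a$ by \eqref{A1} and $X\shuffle X^{-1} = \un$. So it remains to show the second summand $(a \prec X) \succ X^{-1}$ vanishes. This is where I expect the main subtlety: one cannot directly apply a shuffle relation to $(u \prec v)\succ w$, since the relevant axiom \eqref{A3} has the form $a \succ(b\succ c) = (a\shuffle b)\succ c$, not something with $\prec$ on the inside of the left argument. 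Instead I would argue that $(a\prec X)\succ X^{-1}$ must be analyzed through $X^{-1} = \exp^\succ(-a)$: writing $X^{-1} = \un + \sum_{n>0}(-a)^{\succ n}$ and using $(\cdot)\succ \un = 0$ from \eqref{unit-dend}, together with $Y \succ(Z\succ W) = (Y\shuffle Z)\succ W$, one repeatedly rewrites $(a\prec X)\succ(-a)^{\succ n}$ and collects a telescoping cancellation.

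Alternatively — and this is probably the cleaner route to commit to — I would avoid splitting off $\succ$ entirely by instead multiplying the fixed point equation on the right by $X^{-1}$ only via the $\prec$ product and using the characterization of $X^{-1}$ as the solution of $X^{-1} = \un - (X-\un)\shuffle X^{-1}$ from Lemma \ref{lem:inverse-shuffle}. Concretely: set $b := (X-\un)\prec X^{-1}$; I want $b = a$. Compute $b \prec X = ((X-\un)\prec X^{-1})\prec X = (X - \un)\prec(X^{-1}\shuffle X) = (X-\un)\prec \un = X - \un$ using \eqref{A1} and $X^{-1}\shuffle X = \un$, $c \prec \un = c$. So $b$ satisfies $b \prec X = X - \un$, i.e. $X = \un + b \prec X$. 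But by Theorem \ref{thm:Gg} (uniqueness of the solution to the left half-shuffle fixed point equation, applied in the shuffle algebra $\overline D$), the element $c$ with $X = \un + c \prec X$ is unique, hence $b = a$. This gives \eqref{left-halfshuffleLOG} immediately, and the $\succ$-version follows by the mirror-image argument: from $Z = \un + Z \succ b$ set $b' := Z^{-1}\succ(Z-\un)$ and compute $Z \succ b' = Z\succ(Z^{-1}\succ(Z-\un)) = (Z\shuffle Z^{-1})\succ(Z-\un) = \un \succ (Z-\un) = Z - \un$ using \eqref{A3} and \eqref{unit-dend}, then invoke uniqueness again.

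The main obstacle, then, is not any single calculation but rather making sure the uniqueness statement is available in the required generality: Theorem \ref{thm:Gg} is phrased for the specific shuffle algebra $\Lc_A$, so I would either cite the general fixed-point uniqueness for half-shuffle exponentials in a graded connected or free shuffle algebra (which holds by a straightforward degree-by-degree induction — in degree $n$ the equation $X_n = (a \prec X)_n$ determines $X_n$ from lower-degree data, and conversely determines $a_n$ once $X$ is known), or simply run that induction inline. Everything else is a two-line manipulation with \eqref{A1}, \eqref{A3}, and the unit axioms \eqref{unit-dend}.
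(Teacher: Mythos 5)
Your committed argument (the second, ``cleaner'' route) is correct, but it is a detour compared with the paper's proof, which is a one-line direct computation: apply $R_{\prec X^{-1}}$ to both sides of $X-\un = a\prec X$ and use \eqref{A1} together with Lemma \ref{lem:inverse-shuffle} to get
$(X-\un)\prec X^{-1} = (a\prec X)\prec X^{-1} = a\prec(X\shuffle X^{-1}) = a\prec\un = a$.
You instead set $b:=(X-\un)\prec X^{-1}$, compute $b\prec X = X-\un$ by the very same application of \eqref{A1} run in the opposite direction, and then conclude $b=a$ by uniqueness of the solution of the fixed point equation. That works, but it forces you to import (or re-prove by degree induction) a uniqueness statement that Theorem \ref{thm:Gg} only states for $\Lc_A$ --- a complication you correctly flag, and which the direct computation avoids entirely; note moreover that the uniqueness itself is most easily proved by exactly the $R_{\prec X^{-1}}$ trick (from $c\prec X=0$ one gets $c = c\prec(X\shuffle X^{-1}) = (c\prec X)\prec X^{-1}=0$), so nothing is gained by the indirection. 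The same remark applies to your mirror argument for $Z$. One substantive warning about your first, abandoned route: the claim that $(a\prec X)\succ X^{-1}$ vanishes is false in general. Indeed $(X-\un)\shuffle X^{-1} = X\shuffle X^{-1} - \un\shuffle X^{-1} = \un - X^{-1}$, so $(a\prec X)\succ X^{-1} = \un - X^{-1} - a$, whose degree-two part is $-\,a\succ a \neq 0$ generically. Had you committed to that route, the proof would have failed; as written, since you commit to the second route, the proposal stands.
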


\begin{proof}
Identity \eqref{left-halfshuffleLOG} follows by applying $R_{\prec X^{-1}}$ to $(X- \un) = a \prec X$, thanks to shuffle relation \eqref{A1} and Lemma \ref{lem:inverse-shuffle}. 
\end{proof}

We may therefore define the corresponding left (right) half-shuffle logarithm.

 \begin{defn}\label{def:left-right-log}
Let $\overline D$ be an augmented shuffle algebra with unit $\un$. For $Y \in D$ we define the left half-shuffle logarithm, i.e., the inverse map to the left half-shuffle exponential
\begin{equation}
\label{left-log}
	  \log^{\prec}(\un + Y) := Y \prec \big(\sum\limits_{n \geq 0}(-1)^n Y^{\shuffle n}\big).
\end{equation}
The  right half-shuffle logarithm $ \log^{\succ}$ is defined analogously following Lemma \ref{lem:inverse}.
\end{defn}


\subsection{The pre-Lie Magnus expansion}
\label{sub:Magnus}

The following results will be essential in the context of expressing relations between the different cumulants. The solutions of the linear fixed point equations in \eqref{recursion} can be expressed using the shuffle exponential defined in \eqref{ExpLog} \cite{ebrahimimanchon_09}. More precisely, the solution $X=\exp^\prec (a)$ of $X=\un + a\prec X$ can be rewritten
\begin{equation}
\label{shuffleexp-solution}
	X = \exp^\shuffle\!\!\big(\Omega'(a)\big).
\end{equation}
The map $\Omega'$ is called pre-Lie Magnus expansion \cite{ebrahimimanchon_09} and obeys the following equation
\begin{equation}
\label{preLieMagnus}
	\Omega'(a) = \frac{L_{\Omega' \rhd}}{e^{L_{\Omega' \rhd}}-\un}(a)
            =\sum\limits_{m\ge 0} \frac{B_m}{m!}\ L^{(m)}_{\Omega' \rhd}(a)
            =a - \frac{1}{2} a \rhd a + \sum\limits_{m\ge 2} \frac{B_m}{m!}\ L^{(m)}_{\Omega' \rhd}(a).
\end{equation}
Here, the $B_l$'s are the Bernoulli numbers. By Lemma \ref{lem:inverse-shuffle}, the solution of $Z= \un + Z \succ a$ is given by $Z = \exp^\shuffle\!\!\big(-\Omega'(-a)\big).$ In the commutative case, i.e., when $a\succ b=b\prec a$ so that $a\rhd b=0$ for all $a,b \in D$, the map $\Omega'$ reduces to the identity map on $D$. Hence, in a commutative shuffle algebra the two fixed point equations in \eqref{recursion} coincide and the solution is given by $X= \exp^\shuffle(a).$ For $a \in D$ we define the map
\begin{equation}
\label{eq:W}
	W(a) := \frac{e^{L_{a \rhd}} - \un}{{L_{a \rhd}}}(a)= a + \frac 12 a\rhd a + \frac 16 a\rhd(a\rhd a) + \cdots.
\end{equation}
In general, i.e., for any pre-Lie algebra, one can show that $W$ is a bijection, and its compositional inverse $W^{-1} = \Omega'$. See \cite{manchon_11} for details.

The next lemma is a simple consequence of the identities $\exp^{\succ}(x) = \exp^\shuffle\!\!\big(-\Omega'(-x)\big)$ and $\exp^{\prec}(x) = \exp^\shuffle\!\!\big(\Omega'(x)\big)$ as well as $W \circ \Omega'(x) = x=\Omega' \circ W(x)$.  

\begin{lem}\label{lem:transforming}
Let $D$ be a shuffle algebra and $\overline D$ its augmentation by the unit $\un$. For $a \in D$ the following holds 
\begin{equation}
	\exp^{\prec}\big(W(a)\big)	= \exp^\shuffle\!\!\ (a)  
						= \exp^{\succ}\big(-W(-a)\big).  \label{transforming}
\end{equation}
\end{lem}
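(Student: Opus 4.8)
The plan is to derive both equalities in \eqref{transforming} directly from the already-established facts collected in Section~\ref{sub:Magnus}, treating $W$ and $\Omega'$ as mutually inverse bijections on the (free or graded connected) pre-Lie algebra underlying $D$. The starting point is the identity \eqref{shuffleexp-solution}, which says that for every $a \in D$ one has $\exp^\prec(a) = \exp^\shuffle\bigl(\Omega'(a)\bigr)$, together with its $\succ$-counterpart $\exp^\succ(a) = \exp^\shuffle\bigl(-\Omega'(-a)\bigr)$, both quoted from \cite{ebrahimimanchon_09}. Since these hold for arbitrary elements of $D$, I may substitute any element I please.

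\smallskip

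\emph{Left equality.} Apply $\exp^\prec(a) = \exp^\shuffle\bigl(\Omega'(a)\bigr)$ with $a$ replaced by $W(a)$. This gives
\begin{equation*}
	\exp^\prec\bigl(W(a)\bigr) = \exp^\shuffle\bigl(\Omega'(W(a))\bigr).
\end{equation*}
Now invoke $\Omega' \circ W = \id$ on $D$ (stated after \eqref{eq:W}, with reference to \cite{manchon_11}), so the right-hand side collapses to $\exp^\shuffle(a)$, which is exactly the left equality in \eqref{transforming}.

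\smallskip

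\emph{Right equality.} Apply $\exp^\succ(a) = \exp^\shuffle\bigl(-\Omega'(-a)\bigr)$ with $a$ replaced by $-W(-a)$. Then
\begin{equation*}
	\exp^\succ\bigl(-W(-a)\bigr) = \exp^\shuffle\bigl(-\Omega'(W(-a))\bigr) = \exp^\shuffle\bigl(-(-a)\bigr) = \exp^\shuffle(a),
\end{equation*}
again using $\Omega' \circ W = \id$ in the middle step (this time at the element $-a$). Combining the two computations yields the chain of equalities in \eqref{transforming}.

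\smallskip

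There is essentially no obstacle here: the lemma is a bookkeeping consequence of the substitution principle applied to the two half-shuffle exponential representations and the bijectivity $W^{-1} = \Omega'$. The only point that deserves a word of care is the standing convergence convention recalled at the start of Section~\ref{sub:expo}: all of $\exp^\prec$, $\exp^\succ$, $\exp^\shuffle$, $W$ and $\Omega'$ are manipulated as formal series, so that composing them and substituting one into another is legitimate degree by degree in the graded connected case (or formally in the free case). Granting that, the proof is just the two one-line substitutions above.
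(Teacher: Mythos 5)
Your proof is correct and is exactly the argument the paper has in mind: the lemma is introduced there as ``a simple consequence'' of $\exp^{\prec}(x)=\exp^{\shuffle}(\Omega'(x))$, $\exp^{\succ}(x)=\exp^{\shuffle}(-\Omega'(-x))$, and $\Omega'\circ W=\id$, which is precisely the substitution you carry out. Both computations check out, including the sign bookkeeping in the $\succ$ case.
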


This result implies in particular that the solution of the left half-shuffle fixed point equation, $X = \un + a \prec X$, can be written using the right-half shuffle exponential, $X=\exp^{\succ}\big(-W(-\Omega'(a))\big)$, and vice versa, that is,  the solution of $Z = \un + Z \succ a$ can be described through the left-half shuffle exponential, $Z=\exp^{\prec}\big(W(-\Omega'(-a))\big)$.


\section{Relations among monotone, free and boolean cumulants}
\label{sec:relations}

Let us return to the recursions $\Phi=\epsilon + \kappa  \prec \Phi$ and $\Phi=\epsilon + \Phi \succ \beta$ in ${\mathcal L_A}$ where $\kappa,\beta \in g(A)$ are the infinitesimal characters corresponding to free respectively boolean cumulants. Recall that the convolution in ${\mathcal L_A}$ is a shuffle product in the sense of the foregoing section, i.e., $\Phi * \Psi = \Phi \succ \Psi + \Phi \prec \Psi$ for $\Phi, \Psi \in {\mathcal L_A}$. In the free case, where $\Phi=\epsilon + \kappa  \prec \Phi$, we can apply the left half-shuffle logarithm $\log^{\prec}(\Phi)$, and obtain
\begin{equation*}
	\kappa=\log^{\prec}(\epsilon  + (\Phi-\epsilon)) = (\Phi-\epsilon) \prec \Phi^{-1}.
\end{equation*}
Analogously, applying the right half-shuffle logarithm to $\Phi=\epsilon + \Phi \succ \beta$ yields $\beta=\log^{\succ}(\epsilon  + (\Phi-\epsilon))$. Since $\Phi$ is a character its inverse is $\Phi^{-1}=\Phi\circ S$, where $S$ is the antipode \eqref{antipode} of $\overline T(T(A))$.
In general, we have:

\begin{cor}
Let $\kappa $ be an infinitesimal character, such that the character $\Phi=\epsilon + \kappa \prec \Phi$ in ${\mathcal L_A}$ is given by $\Phi:=\exp^{\prec}(\kappa )$. Then by applying the left half-shuffle logarithm, $\kappa=\log^{\prec}(\epsilon  + (\Phi-\epsilon))$, we obtain
\begin{equation}
\label{FreeConv}
	\kappa  = (\Phi-\epsilon) \prec \Phi \circ S \in g(A).
\end{equation}
Analogously, for $\Psi = \exp^{\succ}(\alpha)$ it follows that by applying the right half-shuffle logarithm we obtain $\alpha = \log^{\succ}(\epsilon  + (\Psi-\epsilon))=\Psi \circ S \succ (\Psi-\epsilon) \in g(A)$.
\end{cor}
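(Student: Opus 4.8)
The plan is to apply Lemma~\ref{lem:inverse} directly, which is precisely the assertion that the half-shuffle logarithms invert the half-shuffle exponentials. For the first statement, since $\Phi = \exp^\prec(\kappa)$ satisfies the linear fixed point equation $\Phi = \epsilon + \kappa \prec \Phi$, we have $\Phi - \epsilon = \kappa \prec \Phi$ in the unital shuffle algebra ${\mathcal L}_A$, with unit $\epsilon$. By Lemma~\ref{lem:inverse} (with $X = \Phi$, $a = \kappa$, and using $\un \mapsto \epsilon$), this gives $\kappa = (\Phi - \epsilon) \prec \Phi^{-1}$. The only remaining point is to identify the shuffle inverse $\Phi^{-1}$ with $\Phi \circ S$: since $\Phi$ is a character and the shuffle product $\succ + \prec$ equals the convolution product $*$, the shuffle inverse of $\Phi$ coincides with its convolution inverse, which is $\Phi \circ S$ by the standard Hopf-algebraic fact (recalled in the text after the definition of characters). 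Hence $\kappa = (\Phi - \epsilon) \prec \Phi \circ S$. That $\kappa$ lies in $g(A)$ is immediate from Theorem~\ref{thm:Gg}, which guarantees the unique solution of \eqref{recursion1} is an infinitesimal character.

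For the second statement one argues symmetrically: $\Psi = \exp^\succ(\alpha)$ satisfies $\Psi = \epsilon + \Psi \succ \alpha$, so $\Psi - \epsilon = \Psi \succ \alpha$, and the right-handed half of Lemma~\ref{lem:inverse} yields $\alpha = \Psi^{-1} \succ (\Psi - \epsilon) = \Psi \circ S \succ (\Psi - \epsilon)$, again with $\Psi^{-1} = \Psi \circ S$ because $\Psi$ is a character. The membership $\alpha \in g(A)$ follows from the second half of Theorem~\ref{thm:Gg}.

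I do not anticipate a serious obstacle here: the corollary is essentially a restatement of Lemma~\ref{lem:inverse} and Definition~\ref{def:left-right-log} in the concrete shuffle algebra ${\mathcal L}_A$, combined with the dictionary $\Phi^{-1} = \Phi \circ S$. The one point that deserves a sentence of care is the compatibility of the two notions of inverse: a priori $\Phi^{-1}$ in Lemma~\ref{lem:inverse} means the $\shuffle$-inverse, and one must note that because $\shuffle = *$ on ${\mathcal L}_A$ and $\Phi$ is group-like for $*$, this $\shuffle$-inverse is exactly the convolution inverse $\Phi \circ S$; no separate verification of the explicit series $\sum_{n\ge 0}(-1)^n(\Phi - \epsilon)^{\shuffle n}$ is needed, though it is available from Lemma~\ref{lem:inverse-shuffle}. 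The proof is therefore short: invoke the fixed point equations \eqref{recursion1}, \eqref{recursion2}, apply Lemma~\ref{lem:inverse} in each case, substitute $\Phi^{-1} = \Phi \circ S$ (resp.\ $\Psi^{-1} = \Psi \circ S$), and cite Theorem~\ref{thm:Gg} for the infinitesimal-character property.
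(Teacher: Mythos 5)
Your proposal is correct and follows essentially the same route as the paper: the corollary is obtained by applying Lemma~\ref{lem:inverse} to the fixed point equations \eqref{recursion1} and \eqref{recursion2} and then identifying the $\shuffle$-inverse with the convolution inverse $\Phi\circ S$ (resp.\ $\Psi\circ S$), exactly as in the paragraph preceding the corollary. Your explicit remark that the two notions of inverse coincide because $\shuffle=*$ on ${\mathcal L}_A$ and $\Phi$ is a character is the right point to flag, and the appeal to Theorem~\ref{thm:Gg} for membership in $g(A)$ is likewise consistent with the paper.
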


In the next theorem we apply Lemma \ref{lem:transforming}, which permits us to relate free, boolean and monotone cumulants seen as infinitesimal characters in $g(A)$.

\begin{thm}\label{thm:transforming}
Let $(A,\phi)$ be a non-commutative probability space with unital map $\phi: A \to k$ and $\Phi$ its multiplicative extension to $\overline T(T(A))$. Let $\kappa,\beta,\rho \in g(A)$ be the infinitesimal characters corresponding respectively to free, boolean and monotone cumulants. Then 
\begin{eqnarray}
	\Phi &=& \exp^\prec(\kappa) = \exp^*\!\big(\Omega'(\kappa)\big) = \exp^\succ\big(-W(-\Omega'(\kappa))\big) \label{freeTO}\\
	\Phi &=& \exp^\succ(\beta)  =\exp^*\!\big(-\Omega'(-\beta)\big) = \exp^\prec\big(W(-\Omega'(-\beta))\big)   \label{bolleanTO}\\
	\Phi &=& \exp^*(\rho)=\exp^\prec\big(W(\rho)\big) = \exp^\succ(-W(-\rho)\big) \label{monotoneTO}.
\end{eqnarray}
From this we deduce that $\rho =\Omega'(\kappa)$ and $\beta =-W(-\Omega'(\kappa))$ give monotone respectively boolean cumulants expressed in terms of free cumulants. And analogously for $\rho=-\Omega'(-\beta)$ and $\kappa=W(-\Omega'(-\beta))$, and $\kappa=W(\rho)$ and $\beta=-W(-\rho)$. 
\end{thm}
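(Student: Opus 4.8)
The plan is to derive Theorem~\ref{thm:transforming} by specializing the abstract shuffle-algebra identities of Section~\ref{sec:shuffle} to the unital shuffle algebra ${\mathcal L}_A = (Lin(H,k),\prec,\succ)$, using the three characterizations of $\Phi$ already established in Section~\ref{sec:cumulants}. First I would recall that $\Phi \in G(A)$ satisfies simultaneously the left half-shuffle fixed point equation $\Phi = \epsilon + \kappa \prec \Phi$ (Theorem~\ref{tim:freeprob}), the right half-shuffle fixed point equation $\Phi = \epsilon + \Phi \succ \beta$ (Theorem~\ref{tim:boolean}), and the shuffle-exponential relation $\Phi = \exp^*(\rho)$ (the defining property of $\rho$ in Theorem~\ref{tim:monotone}). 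In the notation of Section~\ref{sub:Magnus}, the first equation says $\Phi = \exp^\prec(\kappa)$, the second $\Phi = \exp^\succ(\beta)$, and the third $\Phi = \exp^\shuffle(\rho)$ since $\exp^*$ is exactly $\exp^\shuffle$ for the convolution shuffle product.

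The core of the argument is then a direct application of Lemma~\ref{lem:transforming}, whose content is precisely the chain of identities $\exp^\prec(W(a)) = \exp^\shuffle(a) = \exp^\succ(-W(-a))$ together with the mutual inversion $W \circ \Omega' = \id = \Omega' \circ W$ and the relations $\exp^\prec(x) = \exp^\shuffle(\Omega'(x))$, $\exp^\succ(x) = \exp^\shuffle(-\Omega'(-x))$ from equations \eqref{shuffleexp-solution} and its $\succ$-analogue. Concretely: starting from $\Phi = \exp^\prec(\kappa)$, equation \eqref{shuffleexp-solution} gives $\Phi = \exp^\shuffle(\Omega'(\kappa)) = \exp^*(\Omega'(\kappa))$, and Lemma~\ref{lem:transforming} applied with $a = \Omega'(\kappa)$ gives $\Phi = \exp^\succ(-W(-\Omega'(\kappa)))$; this is \eqref{freeTO}. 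Similarly, from $\Phi = \exp^\succ(\beta)$ one gets $\Phi = \exp^\shuffle(-\Omega'(-\beta)) = \exp^*(-\Omega'(-\beta))$, and Lemma~\ref{lem:transforming} with $a = -\Omega'(-\beta)$ (noting $\exp^\shuffle(a) = \exp^\prec(W(a))$) yields $\Phi = \exp^\prec(W(-\Omega'(-\beta)))$; this is \eqref{bolleanTO}. Finally, from $\Phi = \exp^*(\rho)$ Lemma~\ref{lem:transforming} with $a = \rho$ immediately gives $\Phi = \exp^\prec(W(\rho)) = \exp^\succ(-W(-\rho))$; this is \eqref{monotoneTO}. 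The extraction of cumulant relations is then pure bookkeeping: comparing \eqref{freeTO} with \eqref{monotoneTO} and using that $\exp^*$ is a bijection $g(A) \to G(A)$ forces $\rho = \Omega'(\kappa)$; comparing the $\exp^\succ$ entries forces $\beta = -W(-\Omega'(\kappa))$; and the remaining relations $\rho = -\Omega'(-\beta)$, $\kappa = W(-\Omega'(-\beta))$, $\kappa = W(\rho)$, $\beta = -W(-\rho)$ follow by the same injectivity of the three exponentials together with $W = \Omega'^{-1}$.

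The main point requiring care — more a verification than an obstacle — is that all the abstract machinery of Section~\ref{sec:shuffle} genuinely applies to ${\mathcal L}_A$: one must note that ${\mathcal L}_A$ is a graded connected shuffle algebra (the grading coming from the grading of $H = \overline T(T(A))$), so that the formal exponentials, logarithms, $\Omega'$, and $W$ all converge degree by degree as stipulated in Section~\ref{sub:expo}, and that $\kappa,\beta,\rho$ lie in the pre-Lie subalgebra $g(A)$ on which $\Omega'$ and $W$ act, with the outputs again in $g(A)$ — this is why each displayed relation can be asserted in $g(A)$. One should also observe that the shuffle exponential $\exp^*$ used throughout Section~\ref{sec:cumulants} and the $\exp^\shuffle$ of Section~\ref{sec:shuffle} are literally the same map under the identification $m_\shuffle = *$ on $g(A)$, and that the injectivity statements invoked are the classical bijectivity of $\exp^*$ (stated after the definition of infinitesimal characters) together with Theorem~\ref{thm:Gg} for $\exp^\prec$ and $\exp^\succ$. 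Given these identifications the proof is a one-line invocation of Lemma~\ref{lem:transforming} in three instances.
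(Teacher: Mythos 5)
Your proposal is correct and follows exactly the route the paper intends: the theorem is an immediate application of Lemma \ref{lem:transforming} together with \eqref{shuffleexp-solution} and its $\succ$-analogue, applied to the three characterizations $\Phi=\exp^\prec(\kappa)$, $\Phi=\exp^\succ(\beta)$, $\Phi=\exp^*(\rho)$, and the cumulant relations are read off using $W=\Omega'^{-1}$ and the bijectivity of the three exponentials. Your added remarks on the graded-connected setting of ${\mathcal L}_A$ and the identification $\exp^*=\exp^\shuffle$ are sensible verifications, not deviations.
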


Theorem \ref{thm:transforming} can be used to recover the following formulas that appeared in \cite{lehner-etal_15}, and which relate multivariate monotone, free,  and boolean cumulants. Regarding equations \eqref{boolean-free} and \eqref{free-boolean} below, the reader is referred to the earlier references \cite{BelNic_08,Lehner_02}. From \cite{lehner-etal_15} we recall the notion of irreducible non-crossing partition, which is a non-crossing partition of the set $[n]$ with the first and last element $(1,n)$ being in the same block. The set of  irreducible non-crossing partitions is denoted by $\mathcal{NC}^{irr}_n$. Let $a_1, \ldots, a_n \in A$ and $r_n(a_1, \ldots, a_n)$, $k_n(a_1, \ldots, a_n)$, and $h_\pi(a_1, \ldots, a_n)$ denote the multivariate boolean, free and monotone cumulants, respectively.   
\begin{align}
\label{boolean-free}
	r_n(a_1, \ldots, a_n) &= \sum_{\pi \in \mathcal{NC}^{irr}_n} k_\pi (a_1, \ldots, a_n) \\
\label{free-boolean}
	k_n(a_1, \ldots, a_n) &= \sum_{\pi \in \mathcal{NC}^{irr}_n} (-1)^{|\pi|-1} r_\pi(a_1, \ldots, a_n)  \\
\label{boolean-monotone}
	r_n(a_1, \ldots, a_n) &= \sum_{\pi \in \mathcal{NC}^{irr}_n} \frac{1}{\tau(\pi)!} h_\pi(a_1, \ldots, a_n) \\
\label{free-monotone}
	k_n(a_1, \ldots, a_n) &= \sum_{\pi \in \mathcal{NC}^{irr}_n} \frac{(-1)^{|\pi|-1} }{\tau(\pi)!} h_\pi(a_1, \ldots , a_n). 
\end{align}

We derive these formulas by induction using Theorem \ref{thm:transforming}. Let $w=a_1 \cdots a_n \in T(A)$ and let $\kappa,\beta,\rho \in g(A)$ be the infinitesimal characters corresponding respectively to free, boolean and monotone cumulants. We  first address \eqref{boolean-free}, which is equivalent to 
$$
	\beta(a_1 \cdots a_n) = \sum_{1,n \in S \subset [n]} \kappa(a_S) \Phi(a_{J^S_{[n]}}), 
$$
with $\beta(a_1 \cdots a_n) = r_n(a_1, \ldots, a_n)$ and $\kappa(w) =k_n(a_1, \ldots, a_n)$. 
From $\exp^\succ(\beta)=\exp^\prec(\kappa)$, we deduce that $\Phi \succ \beta = \kappa \prec \Phi$, such that  
\begin{align}
	\Phi \succ \beta(w) &= \beta(w) + \sum_{j=1}^{n-1} \Phi(a_{j+1} \cdots a_n)\beta(a_1 \cdots a_j)
	=\sum_{1 \in S \subset [n]} \kappa(a_S) \Phi(a_{J^S_{[n]}}).
\end{align}
From this we obtain 
\begin{align}
	\beta(w) &=\sum_{1 \in S \subset [n]} \kappa(a_S) \Phi(a_{J^S_{[n]}})
		- \sum_{j=1}^{n-1} \Phi(a_{j+1} \cdots a_n)\beta(a_1 \cdots a_j)\\
		&= \sum_{1,n \in S \subset [n]} \kappa(a_S) \Phi(a_{J^S_{[n]}})
		 + \sum_{1 \in S \subset [n] \atop n \notin S} \kappa(a_S) \Phi(a_{J^S_{[n]}})
		 - \sum_{j=1}^{n-1} \Phi(a_{j+1} \cdots a_n)\beta(a_1 \cdots a_j) \label{calc1}.
\end{align}
It is clear that $\kappa(a)=\beta(a)$ for $a \in A$. For $n=2$ we find 
$$
	\beta(a_1a_2) = \kappa(a_1a_2) + \kappa(a_1)\kappa(a_2) - \Phi(a_2)\beta(a_1) = \kappa(a_1a_2) . 
$$
Hence, for $n>2$ we use induction and write $\beta(a_1 \cdots a_j)= \sum_{1,j \in S \subset [j]} \kappa(a_S) \Phi(a_{J^S_{[j]}})$ in \eqref{calc1}. Then
\begin{align}
	\beta(w) &= \sum_{1,n \in S \subset [n]} \kappa(a_S) \Phi(a_{J^S_{[n]}})
		 + \sum_{1 \in S \subset [n] \atop n \notin S} \kappa(a_S) \Phi(a_{J^S_{[n]}})\\
		 &- \sum_{j=1}^{n-1} \Phi(a_{j+1} \cdots a_n) \sum_{1,j \in T \subset [j]} \kappa(a_T) \Phi(a_{J^T_{[j]}})\\
		&=\sum_{1,n \in S \subset [n]} \kappa(a_S) \Phi(a_{J^S_{[n]}}).
\end{align}
In the last step we used that 
$$
	0= \sum_{1 \in S \subset [n] \atop n \notin S} \kappa(a_S) \Phi(a_{J^S_{[n]}})\\
		 - \sum_{j=1}^{n-1} \Big(\sum_{1,j \in T \subset [j]} \kappa(a_T) \Phi(a_{J^T_{[j]}})\Big)\Phi(a_{j+1} \cdots a_n).
$$

The inverse identity \eqref{free-boolean} follows similarly from $\Phi^{-1} \succ (-\kappa ) =  (-\beta) \prec \Phi^{-1}$, which is deduced from $\Phi^{-1} =\exp^\prec(- \beta)=\exp^\succ(-\kappa)$. 

Next we turn to identity \eqref{boolean-monotone}. It follows from using \eqref{monotone3}, i.e., the moment-cumulant relation in the monotone setting $\exp^*(\rho)(a_1\cdots a_n) = \sum_{\gamma \in \mathcal{NC}_n} \frac{1}{\tau(\gamma)!} h_{\gamma}(a_1,\ldots, a_n)$ and the right half-shuffle fixed point equation for boolean cumulants, $\Phi - \epsilon = \Phi \succ \beta$. We deduce via induction that 
\begin{align}
	\exp^*(\rho)(a_1\cdots a_n) &= \sum_{\gamma \in \mathcal{NC}_n} \frac{1}{\tau(\gamma)!} h_{\gamma}(a_1,\ldots, a_n) \\
	& =\sum_{j=1}^{n-1} \beta(a_1 \cdots a_j) \Phi(a_{j+1} \cdots a_n) + \beta(w)\\
	&= \sum_{j=1}^{n-1} \Big( \sum_{\sigma \in \mathcal{NC}^{irr}_j} \frac{1}{\tau(\sigma)!} h_{\sigma}(a_1, \ldots, a_j)\Big)\Phi(a_{j+1} \cdots a_n) + \beta(w)\\
	&= \sum_{j=1}^{n-1} \Big( \sum_{\sigma \in \mathcal{NC}^{irr}_j} \frac{1}{\tau(\sigma)!} h_{\sigma}(a_1 , \ldots, a_j)\Big)\Big(\sum_{\gamma \in \mathcal{NC}_{n-j}} \frac{1}{\tau(\gamma)!} h_{\gamma}(a_{j+1}, \ldots, a_n)\Big) + \beta(w).
\end{align} 
From this we conclude that 
\begin{align}
	\beta(w) &= \sum_{\gamma \in \mathcal{NC}_n} \frac{1}{\tau(\gamma)!} h_{\gamma}(w) 
	- \sum_{j=1}^{n-1} \Big( \sum_{\sigma \in \mathcal{NC}^{irr}_j} \frac{1}{\tau(\sigma)!} h_{\sigma}(a_1, \ldots, a_j)\Big)\Big(\sum_{\gamma \in \mathcal{NC}_{n-j}} \frac{1}{\tau(\gamma)!} h_{\gamma}(a_{j+1}, \ldots, a_n)\Big)\\
	&= \sum_{\sigma \in \mathcal{NC}^{irr}_n} \frac{1}{\tau(\sigma)!} h_{\sigma}(a_1, \ldots, a_n).
\end{align} 

The last identity \eqref{free-monotone} follows by a similar argument applied to $\Phi^{-1}-\epsilon = \Phi^{-1} \succ(-\kappa )$. The inverse relations of \eqref{boolean-monotone} and \eqref{free-monotone} are more involved as there is no fixed point equation for monotone cumulants available. Instead, one has to go back to Proposition \ref{thm:transforming}, from which we see that the inverse relation of \eqref{free-monotone} is given by 
$$
	h_n(a_1, \ldots,a_n )=\rho(a_1 \cdots a_n) =\Omega'(\kappa)(a_1 \cdots a_n) 
	= \sum_{n \ge 0} \frac{B_n}{n!} L^{(n)}_{\Omega'(\kappa) \rhd}(\kappa)(a_1 \cdots a_n).
$$
Note that the last sum has only finitely many terms. The inverse relation of \eqref{boolean-monotone} is given by 
$$ 
	h_n(a_1, \ldots,a_n )=\rho(a_1 \cdots a_n) =-\Omega'(-\beta)(a_1 \cdots a_n) 
	= -\sum_{n \ge 0} \frac{B_n}{n!} L^{(n)}_{\Omega'(-\beta) \rhd}(-\beta)(a_1 \cdots a_n) .
$$  
We leave the details of these calculations to the reader.


\end{document}